\newcommand{\sym}{\mathrm{sym}}
\newcommand{\ext}{\mathrm{ext}}
\newcommand{\spin}{\mathrm{spin}}
\newcommand{\tr}{\mathrm{tr}}
\newcommand{\A}{{\mathbb A}}
\newcommand{\Q}{{\mathbb Q}}
\newcommand{\Z}{{\mathbb Z}}
\newcommand{\R}{{\mathbb R}}
\newcommand{\C}{{\mathbb C}}
\newcommand{\GL}{{\rm GL}}
\newcommand{\Sp}{{\rm Sp}}
\newcommand{\GSp}{{\rm GSp}}
\newcommand{\Lfunction}{\hbox{$L$-function}}
\def\term#1{\textbf{\textit{#1}}}
\newcommand{\forget}[1]{}
\def\qdots{\mathinner{\mkern1mu\raise0pt\vbox{\kern7pt\hbox{.}}\mkern2mu
\raise3.4pt\hbox{.}\mkern2mu\raise7pt\hbox{.}\mkern1mu}}
\renewcommand{\labelenumi}{\roman{enumi})}
\newtheorem{lemma}{Lemma}[section]
\newtheorem{theorem}[lemma]{Theorem}
\newtheorem{intuition}{Intuition}[section]
\newtheorem{corollary}[lemma]{Corollary}
\newtheorem{proposition}[lemma]{Proposition}
\newtheorem{remark}[lemma]{Remark}
\newtheorem{conjecture}[lemma]{Conjecture}
\newcommand{\ncr}[1]{{\relax}}
\begin{document}

\title[Multiplicity one for analytic $L$-functions and applications]{Multiplicity one for analytic $L$-functions and applications}

\author[Farmer, Pitale, Ryan, and Schmidt]{DAVID W. FARMER\\
\texttt{Caltech 8-32, 1200 E California Blvd}\\
\texttt{Pasadena, CA 91125}\\
email: \texttt{farmer@aimath.org}\\
\nextauthor
AMEYA PITALE\\
\texttt{University of Oklahoma, Department of Mathematics, Norman, OK
73019-3103}\\
email: \texttt{apitale@math.ou.edu}\\
\nextauthor
NATHAN C. RYAN\\
\texttt{Bucknell University, Department of Mathematics, Lewisburg, PA 17837}\\
email: \texttt{nathan.ryan@bucknell.edu}\\
\nextauthor
RALF SCHMIDT \thanks{This work was supported by an AIM SQuaRE. Ameya Pitale and Ralf Schmidt were supported by National Science Foundation grant DMS  1100541.}\\
\texttt{University of North Texas, Department of Mathematics, Denton, TX
76203}\\
email: \texttt{ralf.schmidt@unt.edu}
}

\receivedline{Received \today}

\maketitle

\begin{abstract}
We give conditions for when two
Euler products are the same given that they satisfy a functional
equation and their coefficients
are not too large
and do not differ from each other by too much.
Additionally, we prove a number of multiplicity one type
results for the number-theoretic objects attached to
$L$-functions.  These results follow from our main result, which has
slightly weaker hypotheses than previous multiplicity one theorems for
$L$-functions.
Significantly stronger results are available when the L-function
is known to be automorphic.
\end{abstract}

\section{Introduction}
An analytic $L$-function is a Dirichlet series that converges absolutely
in some right half plane, has a meromorphic continuation to a
function of order~$1$ with finitely many poles, satisfies
a functional equation, and admits an Euler product. For example,
the (incomplete) $L$-functions attached to tempered, cuspidal automorphic
representations, or the Hasse-Weil  $L$-functions attached to
non-singular, projective, algebraic varieties defined over a number
field, conjecturally satisfy these conditions. 

In this paper, using standard techniques from analytic number theory,
we prove a strong multiplicity one result for such $L$-functions,
without reference to any underlying automorphic or geometric
object. We closely follow the work of Kaczorowski and Perelli \cite{KP} but redo their
arguments for two reasons.  First, our results are more
general in that they have slightly weaker hypotheses.  Second, we
think that the techniques should be better known, especially to those
who study $L$-functions automorphically.  
In the spirit of this second motivation, we offer a series of intuitions about the analytic properties of $L$-functions that allow us to prove our theorems.

The multiplicity one results we discuss in this paper are statements
which assert that if two $L$-functions are sufficiently close, then
they must be equal. A model 
example is:

\begin{theorem}\label{thm:SSMOintro}
 Suppose $L_1(s)=\sum a_nn^{-s}$ and $L_2(s)=\sum b_nn^{-s}$
 are Dirichlet series which continue to meromorphic functions of
 order $1$ satisfying appropriate functional equations and having
appropriate  Euler products.
 Assume that $L_1(s)$ and $L_2(s)$ satisfy the Ramanujan conjecture.
 Assume also that $a_p=b_p$ for almost all $p$. Then
 $L_1(s)=L_2(s)$.
\end{theorem}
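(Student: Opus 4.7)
The plan is to show $L_1(s)=L_2(s)$ by proving that the ratio $F(s):=L_1(s)/L_2(s)$ is identically $1$, via a zero-free region argument in $\Re(s)>1/2$ coming from Ramanujan, followed by a functional-equation reflection and a Hadamard factorization.

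First, under the Ramanujan hypothesis write each local factor as $Q_{i,p}(X)^{-1}=\prod_{j=1}^{d}(1-\alpha_{i,j,p}X)^{-1}$ with $|\alpha_{i,j,p}|\le 1$. Then, for $\Re(s)>1$,
$$\log L_i(s)=\sum_p\sum_{k\ge 1}\frac{c_i(p^k)}{k\,p^{ks}},\qquad c_i(p^k):=\sum_{j=1}^{d}\alpha_{i,j,p}^{k},\quad c_i(p)=a_i(p).$$
Because $|c_i(p^k)|\le d$, the contribution from $k\ge 2$ converges absolutely in $\Re(s)>1/2$. Hence
$$\log L_1(s)-\log L_2(s)=\sum_{p}\frac{a_1(p)-a_2(p)}{p^s}+H(s),$$
where $H$ is holomorphic on $\Re(s)>1/2$ and, by hypothesis, the first sum is a \emph{finite} Dirichlet polynomial (hence entire). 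Consequently $F(s)$ has a holomorphic, nowhere-vanishing logarithm on $\Re(s)>1/2$, so $L_1$ and $L_2$ possess identical zeros and poles in this half-plane.

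Next I invoke the functional equations. Since both $L_i$ come with appropriate functional equations of the standard form relating $s\leftrightarrow 1-s$ through matching archimedean factors and conductor, the symmetry reflects the zero/pole data from $\Re(s)>1/2$ to $\Re(s)<1/2$. Combined with the result of the previous paragraph, $L_1$ and $L_2$ therefore share all zeros and poles in $\C$, so $F$ is an entire, nowhere-zero function. Because each $L_i$ has order $1$, so does $F$, and Hadamard factorization yields $F(s)=e^{as+b}$ for some constants $a,b$.

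Finally, letting $\sigma=\Re(s)\to+\infty$ with $s=\sigma$ real, the Dirichlet series for each $L_i(\sigma)$ tends to $1$, so $F(\sigma)\to 1$. This forces $a=b=0$ and hence $L_1\equiv L_2$.

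I expect the delicate step to be the functional-equation reflection: the phrase ``appropriate functional equations'' must be strong enough to guarantee that the completed $L$-functions share gamma factors and conductor, so that equality of zeros on $\Re(s)>1/2$ propagates to $\Re(s)<1/2$. Pinning this down — and in particular showing that the archimedean data of $L_1$ and $L_2$ are forced to agree under the given hypotheses — is the technical heart of the argument and where the weakening of the axioms from \cite{KP} must be managed carefully.
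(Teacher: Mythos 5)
Your first step is sound: under the full Ramanujan bound the $k\ge 2$ part of $\log L_i$ converges absolutely in $\sigma>\frac12$, so with $a_1(p)=a_2(p)$ at almost all primes the ratio $F=L_1/L_2$ has a holomorphic logarithm in the \emph{open} half-plane $\sigma>\frac12$, hence no zeros or poles there. The two genuine gaps come after that, and they are exactly the points where the paper's proof does the real work. First, the reflection step: you reflect the zero/pole data of $F$ through ``the'' functional equation, but $L_1$ and $L_2$ come with a priori different conductors and $\Gamma$-data, so $F(s)$ is related to $\overline{F}(1-s)$ only up to a quotient of $\Gamma$-factors and conductor powers, which can itself have infinitely many zeros or poles in $\sigma<\frac12$. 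You flag this (``the technical heart'') but never supply it, and nothing in your hypotheses hands it to you. The paper sidesteps the issue by working with the completed ratio $\lambda(s)=\Lambda_1(s)/\Lambda_2(s)$ as in \eqref{eqn:Lambdaratio}, which satisfies $\lambda(s)=\varepsilon\overline{\lambda}(1-s)$ whether or not the archimedean data match; the partial Selberg bound $\Re(\mu_j),\Re(\nu_k)>-\frac12$ ensures the $\Gamma$-quotient contributes no zeros or poles in $\sigma\ge\frac12$, and the possible mismatch of $\Gamma$-factors is eliminated only at the very end, by comparing the Hadamard form $e^{As}r(s)$ with the Stirling asymptotics of $\lambda(\sigma)$ as $\sigma\to\infty$ (the $\sigma\log\sigma$ term must vanish); under the temperedness condition this is also Proposition~\ref{prop:smoL}.

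Second, and more seriously, the critical line itself. Your zero-freeness is only in the open half-planes, and $s\mapsto 1-s$ fixes the line $\sigma=\frac12$; your expansion of $\log F$ gives no information there. A priori $L_1$ and $L_2$ each have infinitely many zeros on $\sigma=\frac12$, and before the theorem is proved there is no reason these zero sets coincide, so you cannot conclude that $F$ (or $\lambda$) is entire and nonvanishing; the step ``order $1$, no zeros or poles, hence $e^{as+b}$'' is exactly the conclusion you have not earned. This is what Lemma~\ref{lem:M2zerosimproved} (following Lemma~1 of \cite{KP}) is for: after factoring the Euler-product ratio as in Lemmas~\ref{lem:factorL} and~\ref{lem:quotientdifference} and shifting $s\mapsto s+\frac12$, it shows there are only \emph{finitely many} zeros or poles on the boundary line, hence finitely many in $\sigma\ge\frac12$, hence, by the functional equation of $\lambda$, finitely many in all of $\C$. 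One then gets $\lambda(s)=e^{As}r(s)$ with $r$ rational, and the asymptotic comparison as $\sigma\to\infty$ (not merely the limit $F(\sigma)\to1$, which would suffice only in your idealized situation) forces the first nonzero Dirichlet coefficient of $L_1/L_2$ beyond $n=1$ to vanish, i.e.\ $L_1=L_2$. So the route is right in outline, but you must (i) complete the $L$-functions before reflecting and (ii) add a boundary-line argument of Kaczorowski--Perelli type to control $\sigma=\frac12$.
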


The precise conditions on the functional equation and Euler product
are described in Section~\ref{sec:Ldefinition}. A  weaker version of Theorem~\ref{thm:SSMOintro}, requiring
equality of the local Euler factors instead of the $p$th Dirichlet
coefficients, is given in~\cite{RudSar}.
Theorem~\ref{thm:SSMOintro} is also a consequence of the main
result in~\cite{KP}.
The result we will actually prove, Theorem \ref{thm:SSMO}, is
stronger. First, instead of
requiring equality of the $p$th Dirichlet series coefficients, we
only require that they are close on average. Second, the Ramanujan
hypothesis can be slightly relaxed.

We will present some applications of strong multiplicity one for
$L$-functions. The first application is to cuspidal automorphic
representations of $\GL(n,\A_\Q)$, where $\A_\Q$ denotes the ring
of adeles of the number field $\Q$. Any such representation $\pi$
factors as $\pi=\otimes\pi_p$, where $\pi_p$ is an irreducible
admissible representation of $\GL(n,\Q_p)$ (we mean $\Q_p=\R$ for
$p=\infty$). Attached to $\pi$ is an automorphic $L$-function
$L(s,\pi)$, whose finite part is $L_{{\rm fin}}(s,\pi) = \prod_{p < \infty} L(s, \pi_p)$. The completion of $L_{{\rm fin}}(s,\pi)$ is known to be ``nice'', and
hence $L_{{\rm fin}}(s,\pi)$ is the kind of function to which Theorem \ref{thm:SSMOintro}
applies. At almost all primes $p$ we have
$L(s,\pi_p)=\det(1-A(\pi_p)p^{-s})^{-1}$, where $A(\pi_p)={\rm
diag}(\alpha_{1,p},\ldots,\alpha_{n,p})$ is 
a diagonal matrix whose entries are the Satake parameters at~$p$.  
The Ramanujan conjecture is the assertion that each $\pi_p$ is
tempered, which in this context implies that $|\alpha_{j,p}|=1$.  In
particular note that $L(s,\pi_p)$ is a polynomial in $p^{-s}$ and this
polynomial has all its roots on the unit circle.

An easy consequence of Theorem \ref{thm:SSMOintro} is the following.

\begin{theorem}\label{thm:SSMOglnintro}
 Suppose that $\pi$ and $\pi'$ are (unitary) cuspidal automorphic representations of $\GL(n,\A_\Q)$ satisfying $\tr(A(\pi_p))=\tr(A(\pi_p'))$ for almost all~$p$. Assume that both $L_{{\rm fin}}(s,\pi)$ and $L_{{\rm fin}}(s,\pi')$ satisfy the Ramanujan conjecture. Then $\pi=\pi'$.
\end{theorem}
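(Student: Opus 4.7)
My plan is to reduce the statement to Theorem~\ref{thm:SSMOintro} by observing that the trace of the Satake matrix at an unramified prime is exactly the $p$-th coefficient of the Dirichlet series of $L_{\rm fin}(s,\pi)$, and then to invoke the classical strong multiplicity one theorem of Jacquet--Shalika in order to pass from equality of $L$-functions to equality of representations.

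First I would expand the unramified local Euler factor
\[
L(s,\pi_p) \;=\; \det\bigl(1-A(\pi_p) p^{-s}\bigr)^{-1} \;=\; \prod_{j=1}^{n}\bigl(1-\alpha_{j,p} p^{-s}\bigr)^{-1},
\]
and read off that the coefficient $a_\pi(p)$ in $L_{\rm fin}(s,\pi)=\sum a_\pi(n) n^{-s}$ equals $\alpha_{1,p}+\cdots+\alpha_{n,p}=\tr(A(\pi_p))$, and similarly for $\pi'$. The hypothesis on traces therefore translates directly into $a_\pi(p)=a_{\pi'}(p)$ for almost all primes~$p$.

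Next I would verify that $L_{\rm fin}(s,\pi)$ and $L_{\rm fin}(s,\pi')$ satisfy the axioms required in Theorem~\ref{thm:SSMOintro}: degree~$n$ Euler products; meromorphic continuation of order~$1$ with finitely many poles; a Godement--Jacquet functional equation once the archimedean and finitely many ramified local factors are incorporated into the completed $L$-function in the way permitted in Section~\ref{sec:Ldefinition}; and, by hypothesis, the Ramanujan bound. Theorem~\ref{thm:SSMOintro} then yields $L_{\rm fin}(s,\pi)=L_{\rm fin}(s,\pi')$.

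Finally, equality of the two Euler products forces the multisets of Satake parameters $\{\alpha_{j,p}\}$ and $\{\alpha'_{j,p}\}$ to agree at almost all primes, so $\pi_p\cong\pi'_p$ at almost all finite places. The strong multiplicity one theorem of Jacquet--Shalika (and Piatetski-Shapiro) for cuspidal automorphic representations of $\GL(n,\A_\Q)$ then gives $\pi\cong\pi'$. The only real care needed in this argument is the bookkeeping in the middle step, making sure that the ramified and archimedean factors are handled so that the finite Dirichlet series genuinely meets the hypotheses of Theorem~\ref{thm:SSMOintro}; the first and last steps are direct.
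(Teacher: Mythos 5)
Your proposal is correct and follows essentially the same route as the paper: identify $\tr(A(\pi_p))$ with the $p$-th Dirichlet coefficient of $L_{\rm fin}(s,\pi)$, feed the two finite $L$-functions into the analytic multiplicity one theorem (here the full Ramanujan hypothesis lets you use it directly, just as the paper's proof of the stronger Theorem~\ref{thm:SSMOgln} applies Theorem~\ref{thm:SSMO}), and then pass from $L_{\rm fin}(s,\pi)=L_{\rm fin}(s,\pi')$ to $\pi=\pi'$ via the classical strong multiplicity one theorem for $\GL(n)$. The bookkeeping you flag (archimedean parameters and ramified factors) is exactly what the paper handles by citing the bound $\Re(\mu_j),\Re(\nu_j)>-\frac12$ from \cite{BR}, so nothing essential is missing.
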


Most statements of strong multiplicity one in the literature are
phrased in terms of $A(\pi_p)$ and $A(\pi_p')$ being conjugate,
instead of the much weaker condition of the equality of their traces.
Using the stronger version of Theorem \ref{thm:SSMOintro}, we will
in fact prove a stronger result which only requires that the traces
are close enough on average; see Corollary~\ref{thm:smogln} for the precise statement.

Our second application is to paramodular forms of degree $2$.
For such modular forms Weissauer \cite{W} has proved the Ramanujan
conjecture. The Dirichlet coefficients $a_p, b_p$ appearing in Theorem
\ref{thm:SSMOintro} are essentially the Hecke eigenvalues for the Hecke
operator $T(p)$. We therefore have the following:

\begin{theorem}\label{thm:m1siegelintro}
 Suppose $F_j$, for $j=1,2$, are Siegel paramodular eigenforms of weight
 $k_j$ for the paramodular group of level $N_j$, with Hecke eigenvalues $\mu_j(n)$. Suppose, too, that the $F_i$ are not Saito-Kurokawa lifts.  If
 $p^{3/2-k_1}\mu_1(p)=p^{3/2-k_2}\mu_2(p)$ for all but finitely many $p$, then $k_1 = k_2$, $N_1=N_2$ and  $F_1, F_2$ have the same eigenvalues for the Hecke operator $T(n)$ for all~$n$. Furthermore, $F_1$ is a scalar multiple of $F_2$.
\end{theorem}

The remarkable fact here is that the Hecke operator $T(p)$ alone
does not generate the local Hecke algebra at $p$. This Hecke algebra
is generated by $T(p)$ and $T(p^2)$. The fact that the coincidence
of the eigenvalues for $T(p)$ is enough is of course a global
phenomenon. Using the results of \cite{Sch}, we get the conclusion that $F_1$ is a multiple of $F_2$.
 Again, using the version in Theorem \ref{thm:SSMO},
we can prove a stronger result; see Theorem \ref{thm:m1siegel}.

We conclude the paper with a discussion of the relative
merits of the two types of multiplicity one theorems for
L-functions: those which only make use of the L-function
axioms, and those which make use of the underlying objects
associated to an L-function.

\ncr{We conclude the paper with a section in which discuss how our results could be applied to other kinds of $L$-functions. We do not prove any theorems in this final section but do outline three potential applications.  One application is making progress on a result that says for two $L$-functions associated to two automorphic representations $\pi_1,\pi_2$ of $\GL(n)$, it almost suffices to have equality between their coefficients at indices $p^2$, $p^3$ and $p^4$ to deduce equality between all their coefficients (there is a technical hypothesis we could not remove immediately).  The second potential application is to $L$-functions associated to the exceptional group $G_2$ and the final potential application is to degree 10 and 14 $L$-functions associated to Siegel modular forms.}

\subsection{Analytic \emph{vs.} automorphic $L$-functions}
The results we present in this paper primarily consider $L$-functions
defined axiomatically: Dirichlet series with a
functional equation and an Euler product, having an analytic
continuation that satisfies a functional equation.
We will refer to these as ``analytic'' $L$-functions.

All known analytic $L$-functions are (in some cases
conjecturally) the $L$-function of an arithmetic object.
The existence of such underlying objects may make it possible
to prove stronger multiplicity one results for the
$L$-functions.  For example, for $L$-functions of cuspidal
automorphic representations on $\GL(n)$, there are
results much stronger than what follows from our main result.
See
Section~\ref{sec:arithmetic}.  Combining this with Arthur's lift from $\Sp(4)$  to $\GL(4)$
would give a stronger version of the multiplicity one results
we have given for Siegel modular forms; see Theorem~\ref{thm:m1siegel}.

There are examples of $L$-functions which have not been proven to be
automorphic, such as certain symmetric power $L$-functions.
In such a case, the only option is to use
an analytic multiplicity one result, such as those we prove in this paper.

\subsection{Comparison with other analytic multiplicity one results}
We briefly
compare our result to other ``analytic'' strong multiplicity one
results due to Murty and Murty \cite{MM}, Murty \cite{M}, Kaczorowski
and Perelli
\cite{KP} and Kaczorowski \cite{Kac}.  Roughly speaking,
these earlier versions of strong multiplicity one place more and more
conditions on the two $L$-function.  In \cite{MM}, one can find the
hypothesis that $a_p=b_p$ and $a_{p^2}=b_{p^2}$, in \cite{M}
it is assumed that $a_p=b_p$ and some conditions on the twists
of the $L$-functions.  Later, in \cite{KP}, the conditions on the
twists of the $L$-functions is replaced with the assumption that the
$L$-functions have Euler products of a form similar to what we assume.

We highlight a few
notable differences between these earlier versions of strong
multiplicity one and ours, making a more thorough comparison between
the version by Kaczorowski \cite{Kac} as it is the most recent and is closest to our
version. First, in \cite{Kac} it is assumed that the $L$-functions
satisfy the Ramanujan bound while we only assume a partial Ramanujan
bound.  Second, \cite{Kac}  requires the coefficients to be close
in the sense $|a_p-b_p|\ll  p^{-\frac12 -\delta}$ for some
$\delta>0$, while we allow $\delta=0$.
In the proof, our result requires an analysis of possible
zeros on the $\sigma=\frac12$-line, while this is avoided if~$\delta>0$.
Finally, the main corollaries of our result do not follow if one
requires~$\delta>0$.  Despite these differences, the method of
proof is similar, being closely based on~\cite{KP}.

The most salient difference between this paper and the papers with the
earlier versions of strong multiplicity one is that we have
applications in mind.  The consequences we prove are of broad
interest and lie in different areas of number theory. We include the proofs of the analytic results for
completeness and to expose these techniques to as broad an audience as
possible.

\subsection*{Notation}

We review some notation from analytic number theory.
Given two functions $f(x),\,g(x)$, 
\begin{itemize}
\item we write $f(x)\sim g(x)$ as $x\to\infty$ if $\lim_{x\to\infty} f(x)/g(x)=1$;
\item we write $f(x) \ll g(x)$ as $x \to \infty$ if there exists $C>0$ and $x_0>0$ so that if $x > x_0$ then
$|f(x)| \leq C |g(x)|$; this is also written as
$f(x)=\mathcal{O}(g(x))$ as $x\to\infty$;
\item we write $f(x) = o(g(x))$ as $x\to\infty$ if $\lim_{x \to \infty} f(x)/g(x) = 0$.
\end{itemize}
In this paper we drop the phrase ``as $x\to\infty$'' when using 
the above notation.

\section{Multiplicity one for $L$-functions}\label{sec:SS}
In this section we describe the $L$-functions for which we will prove a
multiplicity one result.  As in other approaches to $L$-functions
viewed from a classical perspective, such as that initiated by Selberg~\cite{Sel},
we consider Dirichlet series with a functional equation and an Euler product.
However, in contrast to Selberg, we strive to make all our axioms
as specific as possible.  
Presumably (as conjectured by Selberg) these different axiomatic approaches
all describe the same objects:  $L(s,\pi)$ where $\pi$ is a cuspidal
automorphic representation of~$\GL(n)$.
For a detailed discussion of the L-functions we consider here, see~\cite{FPRS}.
An interesting alternative is the approach of Booker~\cite{Boo},
which describes $L$-functions in abstract terms, modeled on the
explicit formula.  

\subsection{$L$-function background}\label{sec:Ldefinition}

Before getting to $L$-functions, we recall
two bits of terminology that will be used in the following discussion.  
An entire function $f:\C\to\C$ is said to have \term{order at most} $\alpha$ if for all $\epsilon > 0$:
\[
f(s)=\mathcal{O}(\exp(|s|^{\alpha + \epsilon})).
\]
Moreover, we say $f$ has \term{order}
$\alpha$ if $f$ has order at most $\alpha$, and $f$ does not
have order at most $\gamma$ for any $\gamma<\alpha$.  The notion of
order is relevant because functions of finite order admit a factorization
as described by the Hadamard Factorization Theorem. The $\Gamma$-function and all $L$-functions have order~1.

In order to ease notation, we use the normalized $\Gamma$-functions
defined by:
\[
\Gamma_\R(s):=\pi^{-s/2}\,\Gamma(s/2)\ \ \ \ \text{ and }\ \ \ \ \Gamma_\C(s):=2(2\pi)^{-s}\,\Gamma(s).
\]

Below, we give the axioms defining an $L$-function as presented in \cite{FPRS}. Throughout the axioms, $s=\sigma+ it$ is a complex variable with $\sigma$
and $t$ real.
\begin{description}
\setlength{\itemindent}{-\leftmargin}
\setlength{\listparindent}{\parindent}
\item[Axiom $1$ (Analytic properties)]
$L(s)$ is given by a Dirichlet series
\index{Dirichlet series}
\[
\label{eqn:DS}\tag{Ax1.1}
L(s)=\sum_{n=1}^\infty \frac{a_n}{n^s},
\]
where $a_n\in \C$.

{\renewcommand{\labelenumi}{\alph{enumi})}
\begin{enumerate}

\smallskip

\item \emph{Convergence:} $L(s)$ converges absolutely for $\sigma>1$.

\smallskip

\item \emph{Analytic continuation:} $L(s)$ continues
to a meromorphic function of order $1$ having only finitely many poles,
with all poles 
lying on the $\sigma=1$ line.
\end{enumerate}
}

\medskip

\item[Axiom $2$ (Functional equation)]
\index{functional equation!axioms}
There is a positive integer $N$ called the \term{conductor} of the \Lfunction,
\index{conductor}
a positive integer $d$ called the \term{degree} of the \Lfunction,
\index{degree}
a pair of non-negative integers $(d_1, d_2)$ called the \term{signature} of the \Lfunction,
\index{signature}
where $d=d_1+2d_2$,
and complex numbers $\{\mu_j\}_{j=1}^{d_1}$ and $\{\nu_k\}_{k=1}^{d_2}$
called the \term{spectral parameters} of the \Lfunction,
\index{spectral parameters}
such that the \term{completed \Lfunction} 
\index{completed \Lfunction}\index{L-function@\Lfunction!completed}
\[
\label{eqn:Lambda}\tag{Ax2.1}
\Lambda(s) =\mathstrut  N^{s/2} 
\prod_{j=1}^{d_1} \Gamma_\R(s+ \mu_j)
\prod_{k=1}^{d_2} \Gamma_\C(s+ \nu_k)
\cdot L(s)
\]
has  the following properties:

{\renewcommand{\labelenumi}{\alph{enumi})}
\begin{enumerate}

\smallskip

\item \emph{Bounded in vertical strips:}
Away from the poles of the \Lfunction,
$\Lambda(s)$ is bounded in vertical strips $\sigma_1 \leq \sigma \leq \sigma_2$.

\smallskip

\item \emph{Functional equation:} There exists $\varepsilon\in \C$,
\index{functional equation!general}
called the \term{sign} of the functional equation, such that
\index{sign!of the functional equation}
\[
\label{eqn:FE}\tag{Ax2.2}
\Lambda(s) 
=\mathstrut  \varepsilon \overline{\Lambda}(1-s).
\]

%


\end{enumerate}
}

\medskip
\item[Axiom $3$ (Euler product)] There is a product formula
\index{Euler product}
\[
\label{eqn:EP}\tag{Ax3.1}
L(s)= \prod_{p \, {\rm prime}} F_p(p^{-s})^{-1},
\]
absolutely convergent for $\sigma > 1$.

{
\renewcommand{\labelenumi}{\alph{enumi})}
\begin{enumerate}

\smallskip

\item \emph{Polynomial:} $F_p$ is a polynomial with $F_p(0)=1$.

\item \emph{Degree:} Let $d_p$ be the degree of $F_p$.  If $p\nmid N$ then $d_p=d$,
and if $p\mid N$ then $d_p<d$. 

\smallskip
\end{enumerate}
}

\medskip
\item[Axiom $4$ (Temperedness)]   The spectral parameters and Satake parameters satisfy precise bounds.
{
\renewcommand{\labelenumi}{\alph{enumi})}
\begin{enumerate}

\smallskip

\item \emph{Selberg bound:} \label{axiom:preciseselbergeigenvalue}
For every $j$ we have
$\Re(\mu_j) \in \{0,1\}$ and
$\Re(\nu_k) \in \{\frac12,1,\frac32,2,...\}$.

\item \emph{Ramanujan bound:} Write $F_p$ in factored form as
\[
\label{eqn:ram-bound}\tag{Ax4.1}
 F_p(z) = (1-\alpha_{1,p} z)\cdots (1-\alpha_{d_p,p} z)
 \]
 with $\alpha_{j,p} \not = 0$. If $p\nmid N$ then $|\alpha_{j,p}| = 1$ for all $j$. If $p\mid N$ then $|\alpha_{j,p}|= p^{-m_j/2}$ for some $m_j\in \{0,1,2,...\}$, and  $\sum m_j \le d-d_p$.
\end{enumerate}
}
\smallskip

\item[Axiom $5$ (Central character)] There exists a Dirichlet character $\chi$ mod~$N$,
\index{central character!of an \Lfunction}
called the \term{central character} of the \Lfunction.
{
\renewcommand{\labelenumi}{\alph{enumi})}
\begin{enumerate}

\smallskip

\item \emph{Highest degree term:} For every prime $p$,
\[
\label{eqn:Fpchi}\tag{Ax5.1}
F_p(z)=1-a_p z + \cdots + (-1)^d\chi(p) z^d .
\]

\item \emph{Balanced:} We have
$\mathrm{Im} \left(\sum \mu_j +  \sum(2\nu_k+1)\right) =0$.

\item \emph{Parity:} The spectral parameters determine the
parity of the central character:
\[
\label{eqn:parityaxiom}\tag{Ax5.2}
\chi(-1) = (-1)^{\sum \mu_j +  \sum(2\nu_k+1)}.
\]
\end{enumerate}
}
\medskip
\end{description}

If $p\mid N$ then $p$ is called a \term{bad} prime, otherwise, $p$ is a \term{good} prime. 

The Ramanujan bound has been proven in very few cases, the most
prominent of which are holomorphic forms on $\GL(2)$ and
$\GSp(4)$.  See \cite{Sar} for a survey of what is known towards proving the Ramanujan bound. Also see \cite{BB}, \cite{Cl}, \cite{HT} and \cite{Li}. 

We write $|\alpha_{j,p}|\le p^\theta$, for some $\theta<\frac12$, to indicate progress toward the Ramanujan bound, referring to this as a ``partial Ramanujan bound.''

\subsection{A simple version of strong multiplicity one}

In the following proposition we use the term ``$L$-function'' in a precise sense,
referring to a Dirichlet series which satisfies a functional equation of
the form \eqref{eqn:Lambda}-\eqref{eqn:FE} 
with the restrictions
$\Re(\mu_j)\in\{0,1\}$ and $\Re(\nu_j)$ a positive integer or half-integer,
and having an Euler product satisfying \eqref{eqn:EP}.
We refer to the quadruple
$(d,N,(\mu_1,\ldots,\mu_J:\nu_1,\ldots,\nu_K),\varepsilon)$
as the \term{functional equation data} of the $L$-function.

\begin{proposition}\label{prop:smoL}
 Suppose that $L_j(s)=\prod_p F_{p,j}(p^{-s})^{-1}$, for $j=1,2$, are $L$-functions which satisfy a partial Ramanujan bound for
 some $\theta<\frac12$. If  $F_{p,1}=F_{p,2}$ for all but finitely
 many~$p$, then $F_{p,1}=F_{p,2}$ for all $p$, and $L_1$ and $L_2$
 have the same functional equation data.
\end{proposition}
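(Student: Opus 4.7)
The plan is to study the ratio $R(s):=L_1(s)/L_2(s)$, derive a functional equation for it from those of $L_1$ and $L_2$, use Stirling asymptotics to force the archimedean and conductor data of the two $L$-functions to coincide, and then exploit the partial Ramanujan bound to conclude $R\equiv 1$. Since $F_{p,1}=F_{p,2}$ outside some finite set $S$, the Euler products collapse and
\[
R(s) \;=\; \prod_{p\in S}\frac{F_{p,2}(p^{-s})}{F_{p,1}(p^{-s})},
\]
a finite product of rational functions of the quantities $p^{-s}$. In particular $R$ extends to a meromorphic function on $\C$ whose restriction to any vertical strip is uniformly bounded away from a finite union of arithmetic progressions of zeros and poles. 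Writing $G_j(s)$ for the product of gamma factors in $\Lambda_j(s)$ and dividing the functional equations for $L_1$ and $L_2$ yields
\[
R(s) \;=\; \frac{\varepsilon_1}{\varepsilon_2}\Bigl(\frac{N_1}{N_2}\Bigr)^{(1-2s)/2}\frac{G_2(s)\,\overline{G_1}(1-s)}{G_1(s)\,\overline{G_2}(1-s)}\,\overline{R}(1-s).
\]

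Because $R(s)$ and $\overline{R}(1-s)$ are both bounded on vertical strips (away from their poles), the archimedean-conductor quotient on the right must grow at most polynomially in $|t|$ there. A direct Stirling computation shows that this quotient is asymptotic to $|t|^{(d_2-d_1)(2\sigma-1)/2}$ at leading order, so $d_1=d_2$. Then the sub-leading terms in the Stirling expansion feature $t\log t$ contributions to the phase whose coefficients are linear in $(J_1-J_2,K_1-K_2)$, whereas the right-hand side has only linear-in-$t$ phase frequencies $\log p$; these $t\log t$ terms must vanish, giving $J_1=J_2$ and $K_1=K_2$. Matching the next Stirling coefficients, combined with the temperedness restrictions $\Re(\mu_j)\in\{0,1\}$ and $\Re(\nu_k)\in\frac{1}{2}\Z_{>0}$ (which are exactly what makes the parametrization unique), will then pin down the multisets $\{\mu_{1,i}\}=\{\mu_{2,i}\}$ and $\{\nu_{1,k}\}=\{\nu_{2,k}\}$; the remaining constant then pins down $N_1=N_2$. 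At this point the archimedean-conductor quotient is identically $1$, so $R$ satisfies the simple functional equation $R(s)=\kappa\,\overline{R}(1-s)$ with $\kappa:=\varepsilon_1/\varepsilon_2$.

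To conclude that $R\equiv 1$, I would clear denominators to obtain the polynomial identity
\[
\prod_{p\in S}F_{p,2}(p^{-s})\,\overline{F_{p,1}}(p^{s-1}) \;=\; \kappa\prod_{p\in S}F_{p,1}(p^{-s})\,\overline{F_{p,2}}(p^{s-1}),
\]
and then compare coefficients of the distinct monomials in the $\{p^{-s},p^{s-1}\}_{p\in S}$, which are linearly independent as functions of $s$. In the rank-one illustration $F_{p,j}(z)=1-\gamma_{j,p}z$ at a single bad prime, matching the $p^{-s}$ and $p^{s-1}$ coefficients gives $\gamma_{2,p}=\kappa\gamma_{1,p}$ and $\gamma_{1,p}=\overline{\kappa}\gamma_{2,p}$ (hence $|\kappa|=1$), and the constant term then reduces to $|\gamma_{1,p}|^2(\kappa-1)=p(\kappa-1)$. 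The partial Ramanujan bound $|\gamma_{j,p}|\le p^\theta$ with $\theta<\frac{1}{2}$ excludes $|\gamma_{1,p}|^2=p$, so $\kappa=1$ and therefore $F_{p,1}=F_{p,2}$. The general case is the same algebra with more indices.

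The hardest part of the plan will be the Stirling disentanglement in paragraph two: the leading Stirling asymptotics record only aggregate invariants such as $d$, $\sum\Re(\mu_{j,i})$ and $\sum\Re(\nu_{j,k})$, so recovering the individual shifts — and in particular, separating the contribution of the $\Gamma_\R$-factors from that of the $\Gamma_\C$-factors — requires simultaneously tracking several sub-leading Stirling terms and invoking the temperedness hypothesis to exclude spurious reparametrizations (for instance, to prevent trading a pair of $\Gamma_\R$-factors for a single $\Gamma_\C$-factor via the duplication formula).
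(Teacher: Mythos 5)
Your overall strategy (divide the two functional equations, force the archimedean and conductor data to agree, then compare the finitely many bad Euler factors) is reasonable, but the mechanism you propose for the crucial middle step does not work. First, the specific claim about the $t\log t$ terms is incorrect: on a vertical line each $\Gamma_\R$ factor contributes $\tfrac12\,t\log t+O(t)$ to the phase and each $\Gamma_\C$ factor contributes $t\log t+O(t)$, so the $t\log t$ coefficient of your gamma quotient is proportional to $d_1-d_2$, not to the pair $(J_1-J_2,\,K_1-K_2)$; once $d_1=d_2$ is known from the leading modulus $|t|^{(d_2-d_1)(2\sigma-1)/2}$, these terms vanish identically and carry no further information, so they cannot give $J_1=J_2$ and $K_1=K_2$. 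Second, and more fundamentally, no Stirling expansion to any order can perform the ``disentanglement'' you defer to your final paragraph: by the duplication formula one has the exact identity $\Gamma_\C(s+\nu)=\Gamma_\R(s+\nu)\,\Gamma_\R(s+\nu+1)$, so distinct parameter lists can produce literally identical gamma products, with identical asymptotics to all orders. The temperedness normalization must therefore be used non-asymptotically. The paper does exactly this, and more simply: it works with the completed ratio $\lambda(s)=\Lambda_1(s)/\Lambda_2(s)$, notes that the partial Ramanujan bound ($\theta<\tfrac12$) and the partial Selberg-type bounds on $\mu,\nu$ make $\lambda$ zero- and pole-free in $\Re(s)>\theta$, uses the functional equation to transfer this to $\Re(s)<1-\theta$, hence $\lambda$ is entire and non-vanishing; then the $\Q$-linear independence of $\{\log p\}$ kills the finite Euler quotient, and the locations of the right-most poles of $\Gamma_\R(s+\mu)$ and $\Gamma_\C(s+\nu)$, together with $\Re(\mu)\in\{0,1\}$ (which rules out leftovers such as $\Gamma_\C(s+1)/\Gamma_\R(s+1)$, which still has poles), force the gamma factors to cancel identically; the residual $(N_1/N_2)^{s/2}$ is then eliminated by the functional equation, giving $N_1=N_2$ and $\varepsilon_1=\varepsilon_2$. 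Some argument of this pole-location type is what your plan is missing.

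A smaller gap sits in your final coefficient comparison: the monomials built from $p^{-s}$ and $p^{s-1}$ are \emph{not} linearly independent as functions of $s$ --- terms with equal exponent differences collapse onto the same exponential (e.g.\ $p^{-s}\cdot p^{s-1}$ is a constant), and with several bad primes you must group terms and invoke the $\Q$-linear independence of $\{\log p\}$, so ``the same algebra with more indices'' hides real bookkeeping. A cleaner finish along your lines is to compare zero multisets: the partial Ramanujan bound puts the zeros of $s\mapsto F_{p,2}(p^{-s})$ in $\Re(s)\le\theta<\tfrac12$ and those of $s\mapsto \overline{F}_{p,1}(p^{s-1})$ in $\Re(s)\ge 1-\theta>\tfrac12$, so your cleared-denominator identity forces the zero sets, hence the polynomials, on each side of the critical line to agree --- which is essentially the paper's $\log p$ argument in disguise. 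Your rank-one computation is fine, but as written the general case is not yet proved.
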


In particular, the proposition shows that the functional equation data of an
$L$-function is well defined.  There are no ambiguities arising, say,
from the duplication formula of the $\Gamma$-function.
Also, we remark that the partial Ramanujan bound is essential.
One can easily construct counterexamples to the above proposition using Saito-Kurokawa lifts,
which have L-functions of the form $L(s,f)\zeta(s-\frac12)\zeta(s+\frac12)$
and do not satisfy a partial Ramanujan bound.

\begin{proof}
Let $\Lambda_j(s)$ be the completed $L$-function of $L_j(s)$ and
consider
\begin{align}\label{eqn:lambda}
\lambda(s)=\mathstrut&\frac{\Lambda_1(s)}{\Lambda_2(s)}\cr
=\mathstrut& \Bigl(\frac{N_1}{N_2}\Bigr)^{s/2}
\frac{\prod_{j} \Gamma_\R(s+ \mu_{j,1})
\prod_{k} \Gamma_\C(s+ \nu_{k,1})}
{\prod_{j} \Gamma_\R(s+ \mu_{j,2})
\prod_{k} \Gamma_\C(s+ \nu_{k,2})}
\prod_p \frac{F_{p,1}(p^{-s})^{-1}}{F_{p,2}(p^{-s})^{-1}}.
\end{align}
By the assumption on $F_{p,j}$, the
product over $p$ is really a finite product.
Thus, \eqref{eqn:lambda} is a valid expression for $\lambda(s)$ for
all~$s$.  

By the partial Ramanujan bound and the
assumptions on $\mu_j$ and $\nu_j$, we see that $\lambda(s)$ has
no zeros or poles in the half-plane $\Re(s)>\theta$.  But by the
functional equations for $L_1$ and $L_2$ we have
$\lambda(s) = (\varepsilon_1/\varepsilon_2)\overline{\lambda}(1-s)$.
Thus, $\lambda(s)$ also has no zeros or poles in the half-plane
$\Re(s) < 1-\theta$.  Since $\theta<\frac12$, we conclude that
$\lambda(s)$ has no zeros or poles in the entire complex plane.

If the product over $p$ in \eqref{eqn:lambda} were not empty, then it is a rational function in $p^{-s}$.
Hence, if it is not a constant, it has infinitely many zeros or poles of the form $s = \alpha + 2 \pi in/\log(p), n \in \Z$. 
The fact that $\{\log(p)\}$ for distinct $p$ is linearly independent over the
rationals implies that there are no cancellations, and hence $\lambda(s)$ has infinitely many zeros
or poles on some vertical line.
This is a contradiction, $F_{p,1}=F_{p,2}$ for all~$p$.

The $\Gamma$-factors must also cancel identically, because
the right-most pole of $\Gamma_\R(s+\mu)$ is at $-\mu$, and the right-most pole of $\Gamma_\C(s+\nu)$ is at $-\nu$.
This leaves possible remaining factors of the form 
$\Gamma_\C(s+1)/\Gamma_\R(s+1)$, but that also has poles
because the $\Gamma_\R$ factor cancels the first pole
of the $\Gamma_\C$ factor, but not the second pole.
Note that the restriction $\Re(\mu)\in\{0,1\}$ is a critical
ingredient in this argument.

This leaves the possibility that $\lambda(s)=(N_1/N_2)^{s/2}$,
but such a function cannot satisfy the functional
equation $\lambda(s) = (\varepsilon_1/\varepsilon_2)\overline{\lambda}(1-s)$
unless $N_1=N_2$ and $\varepsilon_1=\varepsilon_2$.
\end{proof}

Subsequent proofs of strong multiplicity one will all begin by considering
the ratio of completed L-functions:
$\lambda(s)={\Lambda_1(s)}/{\Lambda_2(s)}$.

\subsection{Lifts of an L-function}
It follows straight from the definition that the product of two L-functions
is an L-function.  But there are more interesting ways to generate
new L-functions from a given set of L-functions, and these will play
a role in the strong multiplicity one results we describe later.
This is reminiscent of the relationship between the Sato-Tate conjecture,
which is a statement about the distribution of coefficients of an L-function,
and analytic properties of the L-function's symmetric powers~\cite{HST, BGH}.

We will denote an L-function $L(s)$ by
$L(s, \mathbf{1})$ 
 and the ``lifts'' of $L$ by $L(s, \rho)$,
where the operation $\rho$ arises by repeatedly applying one of the three
basic operations we now describe. 
We only provide a description of the local factor of a lift
at a good prime.  It is a conjecture that the lift is in fact
an L-function, in which case (by Proposition \ref{prop:smoL})
the bad local factors are uniquely determined.

Let $S$ be the finite set of \emph{bad} primes $p$ of $L(s)$,
and write the good local factors of $L$ as
\begin{equation}
L^S(s) = \prod_{p \text{ good}} \prod_{j=1}^d (1-\alpha_{j,p} p^{-s})^{-1}.
\end{equation}
The partial symmetric and exterior power $L$-functions are defined as follows: 
\begin{equation}\label{partial-sym-defn}
L^S(s,\sym^n) =
\prod_{p \text{ good}}\:
\prod_{\substack{0\leq i_1,\ldots,i_d\leq n\\i_1+\ldots+i_d=n}} (1-\alpha_{1,p}^{i_1} \ldots \alpha_{d,p}^{i_d} p^{-s})^{-1}
\end{equation}
\begin{equation}\label{partial-ext-defn}
L^S(s,\ext^n) =
\prod_{p \text{ good}}\;
\prod_{1\leq i_1<\ldots<i_n\leq d} (1-\alpha_{i_1,p} \ldots \alpha_{i_n,p} p^{-s})^{-1}.
\end{equation}
Those L-functions have degree  $\binom{d+n-1}{n}$ 
and $\binom{d}{n}$, respectively.
Note that $L(s,\sym^1) = L(s, \ext^1) = L(s, \mathbf{1}) = L(s)$.

If $M(s)=\prod_p G_p(p^{-s})^{-1}$ is an $L$-function of degree $d_M$, with
\begin{equation}
G_p(z) = (1-\beta_{1,p} z)\cdots (1-\beta_{d_M,p} z),
\end{equation}
then we can define the partial Rankin-Selberg convolution
\begin{equation}\label{partial-R-S}
L^S\times M^S(s) = 
\prod_{p \text{ good}}\:
\prod_{j=1}^{d_L} \prod_{k=1}^{d_M}(1-\alpha_{j,p} \beta_{k,p} p^{-s})^{-1}.
\end{equation}
When $L(s) = L(s, \rho_1)$ and $M(s) = L(s, \rho_2)$, we denote
$L\times M(s)$ by $L(s, \rho_1 \times \rho_2)$.

\subsection{All lifts, sorted by tensor degree}
The symmetric square and exterior square of an L-function have different degrees,
yet their $p$th coefficients (for a good $p$) look similar:
$a_{p^2}$ and $a_p^2 - a_{p^2}$, respectively.  Both are weighted degree~2 polynomials
in the coefficients of the original L-function, if we assign weight~$j$
to~$a_{p^j}$.  

If $L(s,\mathbf{1})=\sum a_n n^{-s}$ is an L-function and $L(s, \rho) = \sum a(n, \rho) n^{-s}$ is a lift,
we will say that the \term{tensor degree} of $L(s, \rho)$ \term{with respect to}~$L(s, \mathbf{1})$
is~$k$, if at a good prime $p$ for $L(s,\mathbf{1})$,
the lifted coefficient $a(p, \rho)$ is a weighted degree~$k$ polynomial
in~$a_p, a_{p^2}, \ldots$.
The terminology comes from the fact that in the representation-theoretic
view of L-functions, $\rho$ can be found inside $V\otimes V \otimes \cdots \otimes V$,
with $k$ copies of $V$, where $V$ is the standard representation of
$\GL(d,\C)$.

Our main result, Theorem~\ref{thm:SSMOgeneral} will be expressed in terms
of lifts of a given tensor degree.
Since an L-function is determined by its coefficients at a good prime,
all of the tensor degree~$k$ lifts are products and quotients
of lifts of the form $L(s, \sym^{n_1} \times \cdots \times \sym^{n_r})$
where $1\le n_1\le \cdots \le n_r$ and $\sum n_j = k$.

There are
$p(k)$ such L-functions, where $p(k)$ is the number of
partitions of $k$.

\subsection{Intuition for the analytic properties of $L$-functions}\label{sec:intuitions}

In this section we describe 5 principles, or ``intuitions'',
which underlie the proof of our most general forms of strong multiplicity one
for axiomatically defined L-functions.

\begin{intuition}\label{intuition:factorpowerseries}  Power series are very flexible,
in the sense that they can be factored in many ways.
\end{intuition}

Consider this identity of formal power series, where $A_0=1$:
\begin{align}
\label{eq:iden1}
\sum_{n=0}^\infty A_n x^n =&
\left(1+A_1 x + B_2 x^2 + \sum_{j=3}^\infty B_j x^j\right)\cr
&\phantom{xx}\times\left(1+(A_2 - B_2) x^2 + \sum_{j=2}^\infty C_{2j}x^{2j}\right)\cr
&\phantom{xx}\times \left(1 + \sum_{j=3}^\infty D_j x^j\right),
\end{align}
where $B_j$ and $C_j$ can be chosen arbitrarily, and $D_j$ is a weighted homogeneous polynomial in
the $A_n$, $B_n$, and $C_n$ for $n\le j$. Here, the weights are given by the subscripts. For example $A_1A_2$ has weight $3$; so does $A_3$.

A generalization of \eqref{eq:iden1} is
\begin{align}
\label{eq:iden2}
\sum_{n=0}^\infty A_n x^n 
=& \prod_{n=1}^N \left(1+A_{n,n} x^n + \sum_{j=2}^\infty B_{n,nj}x^{n j}\right) \\
&\phantom{xx}\times \left(1+\sum_{j=N+1}^\infty A_{N+1,j}x^j \right),\nonumber
\end{align}
where $B_{m,k}$ can be chosen arbitrarily, and each $A_{m,k}$ is a 
weighted homogeneous polynomial
in $A_j$ and $B_{n,j}$ for $n < m$ and $j\le k$.
There are many ways to create such identities, because $f\in \C[[x]]$
is a unit provided $f(0)\not = 0$.

If one views the above as identities of formal power series,
then indeed the $B_j$ and $C_j$ in \eqref{eq:iden1},
or the $B_{m,k}$ in \eqref{eq:iden2}, can be chosen arbitrarily.
But for the applications in this paper, we need identities
of analytic functions.  This requires control on the growth of the
$A_{m,k}$, and typically this follows by choosing the $B_{m,k}$
to be polynomials whose degree and coefficients grow slowly
as functions of $m$ and~$k$.  If the series converge in a small
disc, then we have an identity for their analytic continuations.

Here is a simple application of the above intuition:

\begin{intuition}\label{intuition:ram-a_p} Assuming the Ramanujan bound,
the zeros and poles of $L(s)$ are primarily determined
by $a_p$, with $a_{p^2}$ playing a very minor role and
$a_{p^n}$ for $n\ge 3$ playing essentially no role at all.
\end{intuition}

In \eqref{eq:iden1} let $A_n = a_{p^n}$ and $x = p^{-s}$,
set $B_j = C_j = 0$, and write $D_{p^j}$ in place of $D_j$. 
We have
\begin{equation}\label{eqn:lfactor3}
\sum_{n=0}^\infty a_{p^n} p^{-n s} 
=
(1+a_p p^{-s})
(1+a_{p^2} p^{-2s})
(1+ \sum_{n=3}^\infty D_{p^n} p^{-n s}),
\end{equation}
where
\begin{equation}
D_{p^3} = a_{p^3} - a_p a_{p^2}
\ \ \ \ \ \ \ \ \ \text{and} \ \ \ \ \ \ \
D_{p^4} = a_{p^4} - a_p a_{p^3} + a_p^2 a_{p^2}
\end{equation}
and in general
$D_{p^n}$ is a weighted degree $n$ homogeneous polynomial in $a_{p^j}$ for
$j \le n$ where $a_{p^j}$ has weight $j$, and which has all
coefficients in $\{-1,0,1\}$.  Perhaps the easiest way to
understand the properties of $D_{p^n}$ is to rearrange~\eqref{eqn:lfactor3}
and expand the geometric series to get 
\begin{equation}
1+ \sum_{n=3}^\infty D_{p^n} p^{-n s}
= \left(\sum_{n=0}^\infty a_{p^n} p^{-n s}\right)
  \left(\sum_{n=0}^\infty (-1)^n a_p^n p^{-n s}\right)
  \left(\sum_{n=0}^\infty (-1)^n a_{p^2}^n p^{-2 n s}\right).
\end{equation}
Then expand the right side and equate coefficients.

Thus, for any L-function $L(s)$ we have
\begin{equation}
L(s) = 
{\mathcal L}_1(s)
{\mathcal L}_2(s)
{\mathcal L}_3(s), \label{L1L2L3}
\end{equation}
say, where the ${\mathcal L}_j(s)$ are Euler products whose local
factors are on the right side of \eqref{eqn:lfactor3}.  
If the coefficients satisfy the Ramanujan bound
$a_{p^n} \ll_\varepsilon  p^{\varepsilon n}$ then $D_{p^n}$
satisfies the same bound, so the Euler product
for ${\mathcal L}_j(s)$ converges absolutely for $\sigma > 1/j$.
Thus, zeros and poles of $L(s)$ in $\sigma \ge \frac12$ can only
come from (the analytic continuation of) ${\mathcal L}_1(s)$ and from the edge of the half-plane
of absolute convergence for ${\mathcal L}_2(s)$.  By the functional
equation, those determine all zeros and poles.

One might suspect that we have gained little,
because maybe all the zeros of $L(s)$ come
from ${\mathcal L}_2(s)$?  This is not the case, because of:

\begin{intuition}\label{intuition:euler-prod-zero}  Nice Euler products have finitely many zeros or
poles on their abscissa of convergence.
\end{intuition}

Here `nice' means that the Euler product has an analytic continuation
past the line of
convergence, and (suitably normalized) there is a  bound for the 
the $p$th coefficients.
An illustrative example is the Riemann zeta-function, which has one pole
on its abscissa of convergence.
See Lemma~\ref{lem:M2zerosimproved}.

\begin{intuition}  Bad Euler factors, and finitely many good Euler factors, can be ignored.
\end{intuition}

This is because, even assuming the weakest possible partial Ramanujan bound,
any finite set of local factors (good or bad)
is regular and nonvanishing in $\sigma>\tfrac12 - \delta$
for some $\delta > 0$.

The situation described in Intuition~\ref{intuition:ram-a_p} is more restrictive than we will
consider in this paper,
because we do not wish to assume a Ramanujan bound.
Instead of the Ramanujan bound, we use:

\begin{intuition}\label{intuition:higher} Analytic properties of the symmetric powers, exterior
powers, and other associated higher degree L-functions, can serve as a replacement for the
Ramanujan bound.
\end{intuition}

We illustrate with a slight modification of
the explanation of Intuition~\ref{intuition:ram-a_p}.
As described in Lemma~\ref{lem:coefficientidentities},
at a good prime the coefficient of $p^{-s}$ in $L(s,\sym^2)$ is $a_{p^2}$.
So similar to \eqref{eqn:lfactor3} except that the middle factor contains
terms for $p^{4s}$, $p^{6s}$, \ldots,  we have an analogue of
\eqref{L1L2L3} of the form 
\begin{equation}
L(s)=
\prod_{p\text{ good}} \left( 1 + a_p p^{-s} \right)
\cdot
L(2s,\sym^2)
\cdot
{\mathcal L}_3(s)\cdot h(s),
\end{equation}
where $h(s)$ is a product over the bad primes of $L(s)$.

By the properties of $D_{p^n}$ in the analogue of  \eqref{eqn:lfactor3}, 
one can check that if $L(s)$ satisfies a partial Ramanujan
bound $a_{p^j} \ll p^{j(\theta + \varepsilon)}$,
then $D_{p^n}$ satisfy the same bound.
Thus, the Euler product for ${\mathcal L}_3(s)$
converges absolutely when $3\sigma - 3\theta > 1$, so when $\sigma > \frac13 + \theta$.
The conclusion is that a partial Ramanujan bound with $\theta < \frac16$
implies that ${\mathcal L}_3(s)$ is regular and nonvanishing in a
neighborhood of~$\sigma\ge \frac12$. 
Thus, only the (analytic continuation of the) product involving $a_p$ and
the function $L(2s,\sym^2)$ can contribute
zeros or poles in that region, so by the functional equation only those
can contribute any zeros or poles to $L(s)$.

A similar identity one can obtain is
\begin{equation}
L(s)=
\prod_p \left( 1 + a_p p^{-s} + a_{p^2} p^{-2s} \right)
\cdot
L(2s,\ext^2)^{-1}
\cdot
{\mathcal L}_3(s) \cdot h(s),
\end{equation}
where again we use ${\mathcal L}_3(s)$ to represent an Euler
products which converges absolutely for $\sigma > \frac13 + \theta$.

There are similar
identities involving higher degree L-functions.
See Lemma~\ref{lem:factorL} for more examples.

The above intuition hides a subtle point.
Since $a_{p^2} \ll p^{2\theta +\varepsilon}$, the series for $L(s,\sym^2)$
is only known to
converge for $\sigma > 1 + 2\theta$.  Now, it is an axiom for the L-functions
we consider here, and a theorem for automorphic L-functions, that the
Dirichlet series converges absolutely for $\sigma > 1$, a fact that does not
follow from a partial Ramanujan bound.  
But, such an assumption for all symmetric powers would not be
a replacement for the Ramanujan bound, because it is equivalent to the
Ramanujan bound:

\begin{lemma} If $L(s)$ is an L-function in the sense of 
Section~\ref{sec:Ldefinition}, then $L(s)$ satisfies the
Ramanujan bound~$\theta=0$ if and only if the Dirichlet series
for every symmetric power $L(s, \sym^j)$ converges absolutely
for~$\sigma > 1$, if and only if no good local factor of any symmetric
power has a pole in $\sigma > 1$.
\end{lemma}

\begin{proof}
If $\alpha_p$ is one of the Satake parameters of $L(s)$
at a good prime~$p$, then $\alpha_p^j$ is one of the Satake parameters
of~$L(s, \sym^j)$.
\end{proof}

Since the Dirichlet series and Euler product for
$L(s, \sym^j)$ can only be assumed to converge absolutely for $\sigma > 1 + j\theta$,
what assumption can we make on $L(s, \sym^j)$ which is not just a disguised form of
the Ramanujan bound?  Combining Intuitions 2.1 and 2.5, which is made precise in Lemma 2.8,
we can write any L-function in the form
\begin{equation}
L(s) = (\text{less complicated terms})
    \cdot L(j s, \sym^j)
    \cdot h_{j+1}((j+1)s)
\end{equation}
where $h_{j+1}$ is a Dirichlet series whose $n$th coefficient is
bounded by $n^{(j+1)\theta}$.
We want all the terms to be nice for $\sigma \ge \frac12$.
That will require
$L(z, \sym^j)$ to be nice for $\Re z \ge \frac{j}{2}$.  The residual
term, $h_{j+1}(z)$ has the same convergence as:
\begin{equation}
\sum \frac{n^{(j+1)\theta}}{n^z} .
\end{equation}
That series converges for $\Re z > (j+1)\theta + 1$.
Thus, $h_{j+1}((j+1)s)$ is nice for $\sigma \ge \frac12$
provided $(j+1)/2 > (j+1)\theta + 1$, which is the same as
$\theta < \frac12 - \frac{1}{j+1}$. With this choice of $\theta$, we get $1+j\theta = \frac j2+\frac 1{j+1}$. Hence to get that $L(z, \sym^j)$ to be nice for $\Re z \ge \frac{j}{2}$, we will have to make the additional assumption that the analytic continuation of $L(z, \sym^j)$ extends from $\Re z > \frac j2+\frac 1{j+1}$ to $\Re z \geq \frac j2$.

Outside the context of our proof, for $j \ge 3$
we are not aware of any significance of the $\sigma = \frac{j}{2}$ line
for~$L(s, \sym^j)$.

\subsection{Measuring the distance between sequences}

Before stating our strong multiplicity one result, we describe various ways to measure that
two sequences on the primes are ``close together.''
This discussion is aimed at providing intuition to readers who are less
familiar with the analytic perspective on 
$L$-functions.

Suppose $\{a_n\}$ and $\{b_n\}$ are two sequences of complex numbers.
Let $A_n = a_n-b_n$ and 
consider the following statements, each of which is a different
way to say that the two functions are not too far apart
on the primes.
\begin{align}\label{apbpsqrt}
|A_p| \ll\mathstrut & 1/\sqrt{\mathstrut p}\phantom{\frac12}& \text{for all $p$}, \\
\label{eqn:apequal}
 \sum_{p\le X} p\,| A_p |^2 \log(p) \ll\mathstrut & X \phantom{\frac12} \\
\label{eqn:sigmato1}
\sum_p \frac{p\,|A_p|^2 \log p}{p^\sigma} \ll\mathstrut & \frac{1}{\sigma - 1} &
\text{ as } \ \sigma\to 1^+ \\
\label{eqn:liminf1}
\liminf_{\sigma \to 1^+} (\sigma - 1) \sum_p \frac{p \, |A_p|^2 \log p}{p^\sigma} < \mathstrut & \infty.
\end{align}

Note that the last two conditions include the assumption that the series
converges for~$\sigma > 1$.
All four conditions
convey the idea that $a_p$ and $b_p$ generally
don't differ
by more than $C/\sqrt{\mathstrut p}$.
In \eqref{apbpsqrt} that bound holds for all~$p$;
in \eqref{eqn:apequal} $a_p$ and $b_p$  can differ by a
fixed amount for finitely many~$p$;
and in \eqref{eqn:sigmato1} and \eqref{eqn:liminf1} they can differ by a
fixed amount for infinitely many~$p$, provided
the set of such $p$ is sufficiently thin.

Inequality~\eqref{apbpsqrt} implies  the other conditions.
For \eqref{eqn:apequal} this is immediate from the Prime Number Theorem,
\begin{equation}\label{PNT}
\sum_{p\le X} \log(p) \sim X ,
\end{equation}
and for \eqref{eqn:sigmato1} it follows from the fact that
the Riemann zeta-function has a simple pole at $s=1$. Likewise, \eqref{eqn:apequal} implies \eqref{eqn:sigmato1} by
partial summation.  Finally, \eqref{eqn:sigmato1} trivially implies \eqref{eqn:liminf1}.

Thus, we see a tension between writing a
condition which is easy to understand, and a condition which
allows more flexibility.  
We will use \eqref{eqn:liminf1} in the statement of our theorems,
but it can be helpful to realize that the results are still
true using a simpler looking condition
like \eqref{apbpsqrt} or \eqref{eqn:apequal}.

\subsection{Strong multiplicity one theorems for analytic $L$-functions}
In this section we state our main theorems.
\begin{theorem}\label{thm:SSMO}
 Suppose $L_1(s)$, $L_2(s)$ are Dirichlet series with Dirichlet
 coefficients $a_n$ and $b_n$ , respectively, which continue to
 meromorphic functions of order 1 satisfying functional equations
 of the form \eqref{eqn:Lambda}-\eqref{eqn:FE} with a partial Selberg bound $\Re(\mu_j),\ \Re(\nu_j)>-\frac12$ for both functions,
 and having Euler products satisfying
 \eqref{eqn:EP}. Assume
 the Dirichlet coefficients at the primes are close to each other
 in the sense that $A_p = a_p-b_p$ satisfies \eqref{eqn:liminf1}.

We have $L_1(s)=L_2(s)$ if any of the following conditions is satisfied:
 \begin{enumerate}
     \item \label{rama0} $L_1$ and $L_2$ satisfy the Ramanujan bound $\theta=0$.
  \item \label{ramaaverage} $L_1(s)$ and $L_2(s)$ satisfy a  partial Ramanujan bound with $\theta < \frac16$ and the quantity $B_p = p^{-\frac12}(a_{p^2}-b_{p^2})$ satisfies \eqref{eqn:liminf1}.
  \item \label{ramaaverage2} $L_1(s)$ and $L_2(s)$ satisfy a  partial Ramanujan bound with $\theta < \frac14$ and
both $B_p$ and $C_p = p^{-1}(a_{p^3}-b_{p^3})$ satisfy~\eqref{eqn:liminf1}.
 \end{enumerate}
\end{theorem}

We can replace bounds on the coefficients by conditions on the lifts
of the L-function.
Let $\Upsilon(L)$ denote the set of operations $\rho$ such that  the partial lifts $L(s, \rho)$  of $L$ 
have an analytic continuation to  a neighborhood of $\sigma \ge m/2$
with finitely many zeros or poles in that half-plane, where
$m$ is the tensor degree of the lift.

\begin{theorem}\label{thm:SSMOgeneral}
Suppose $L_1(s)$, $L_2(s)$ satisfy the conditions in the first paragraph of
Theorem~\ref{thm:SSMO}.
We have $L_1(s)=L_2(s)$ if for each $L_j$ there exists a positive integer $M_j$ such that
for $L_j$ 
the Ramanujan bound holds with 
\begin{equation}
\theta_j \le \frac{M_j -1}{2M_j + 2} = \frac12 - \frac{1}{M_j + 1} ,
\end{equation}
and in addition
all lifts $L_j(s, \rho)$ of tensor degree $ m \le M_j$
have $\rho \in \Upsilon(L)$.
\end{theorem}

Note that when $M_j=1$ the condition on $L_j$ is the Ramanujan bound $\theta=0$,
and there are no conditions on the lifts of~$L_j$.

Since the set of lifts of a certain tensor degree are generated by a proper subset, it is not necessary to require every lift of a given tensor degree to have nice analytic properties:
we collect some representative examples in the next theorem.

\begin{theorem}\label{thm:SSMOpartial}
Suppose $L_1(s)$, $L_2(s)$ satisfy the conditions in the first paragraph of
Theorem~\ref{thm:SSMO}.

We have $L_1(s)=L_2(s)$ if each $L_j$, independently, satisfies any
of the following conditions (perhaps the same one, perhaps different ones):
\begin{enumerate}
\item $\theta_j = 0$,
\item $\theta_j < \frac16$ and $\sym^2 \in \Upsilon(L_j)$, 
\item $\theta_j < \frac14$ and $\sym^2, \sym^3, 1\times \sym^2 \in \Upsilon(L_j)$.
\item $\theta_j < \frac{3}{10}$  and $\sym^2, \sym^3, 1\times \sym^2, \ext^2 \times \sym^2\in \Upsilon(L_j)$.
\end{enumerate}

We have $L_1(s)=L_2(s)$ if each $L_j$, independently, satisfies any
of the following conditions (perhaps the same one, perhaps different ones):
\begin{enumerate}[resume]
\item $\theta_j = 0$,
\item $\theta_j < \frac16$ and $\ext^2 \in \Upsilon(L_j)$,\label{sixthext2}
\item $\theta_j < \frac14$ and $\ext^2, \sym^3, 1\times \ext^2 \in \Upsilon(L_j)$. \label{quarterext2sym3}
\item $\theta_j < \frac14$ and $\ext^2, \ext^3, 1\times \sym^2 \in \Upsilon(L_j)$.
\item $\theta_j < \frac{3}{10}$  and $\ext^2, \sym^3, 1\times \ext^2, \ext^2 \times \sym^2\in \Upsilon(L_j)$.
\end{enumerate}
\end{theorem}

One can write down examples with $\theta_j < \frac13$ or $\theta_j < \frac{5}{14}$
just by assembling information presented in the proof, and examples for larger $\theta_j$
by doing more tedious algebra.

\subsection{Some technical lemmas}

In this section we provide the lemmas required for the proof of
Theorem~\ref{thm:SSMO}.
There are two types of lemmas we require.  
The first deals with manipulating Euler products and establishing zero-free
half-planes via the convergence of those products. The second deals with
possible zeros at the edge of the half-plane of convergence.

\subsubsection{Coefficients of lifts}

If $L(s)=\sum a_nn^{-s}$ is an L-function then for $\rho=\sym^n$ or $\ext^n$ 
we write the Dirichlet series for the lift as
\begin{equation}
L(s,\rho) = \sum_j a(j,\rho) \,j^{-s}.
\end{equation}
\begin{lemma}\label{lem:coefficientidentities}
 If $p$ is a good prime for $L$ then
 \begin{align}
\label{lem:coefficientidentitieseq1}a(p,\sym^n) =\mathstrut & a_{p^n}\\
\label{lem:coefficientidentitieseq2}a(p^2,\sym^2) =\mathstrut & a_{p^4}-a_pa_{p^3}+a_{p^2}^2\\
\label{lem:coefficientidentitieseq3}a(p^3,\sym^2) =\mathstrut & a_{p^6} - a_pa_{p^5} + a_p^2a_{p^4} + a_{p^3}^2-2 a_pa_{p^2}a_{p^3} + a_{p^2}^3\\
\label{lem:coefficientidentitieseq4}a(p^4,\sym^2) =\mathstrut & a_{p^8} - a_p a_{p^7} + a_p^2 a_{p^6} - a_{p^3} a_{p^5} + a_p a_{p^2} a_{p^5}  - a_p^3 a_{p^5}+ 2 a_{p^4}^2 - 3 a_p a_{p^3} a_{p^4} \nonumber\\
 &\phantom{xx}- a_{p^2}^2 a_{p^4} + 2 a_p^2 a_{p^2} a_{p^4} + 2 a_{p^2} a_{p^3}^2 + 
 a_p^2 a_{p^3}^2- 3 a_p a_{p^2}^2 a_{p^3} + a_{p^2}^4  \\
\label{lem:coefficientidentitieseq5}a(p^2,\sym^3) =\mathstrut & a_{p^6}-a_pa_{p^5}+a_{p^2}a_{p^4} \\
\label{lem:coefficientidentitieseq6}a(p,\ext^2) =\mathstrut & a_p^2 - a_{p^2}\\
\label{lem:coefficientidentitieseq7}a(p^2,\ext^2) =\mathstrut & a_p^4 - 3 a_p^2a_{p^2} + 2a_{p^2}^2 + a_pa_{p^3} - a_{p^4}\\
\label{lem:coefficientidentitieseq8}a(p^3,\ext^2) =\mathstrut & a_p^6 - 5 a_p^4a_{p^2} + 7a_p^2a_{p^2}^2 - 2a_{p^2}^3 + 3a_p^3a_{p^3}  - 6a_pa_{p^2}a_{p^3} + 2a_{p^3}^2 - 2a_p^2a_{p^4} \nonumber\\
& \phantom{x} + 2a_{p^2}a_{p^4} + a_pa_{p^5} - a_{p^6}\\
\label{lem:coefficientidentitieseq9}a(p,\ext^3) =\mathstrut & a_{p^3} + a_p^3 - 2 a_pa_{p^2}.
\end{align}
\end{lemma}

\begin{proof}
The coefficients of $L(s)$ are symmetric polynomials in the Satake parameters; in fact they are the completely homogeneous symmetric polynomials and, as such, form a generating set for all symmetric polynomials.  As the coefficients of the symmetric and exterior power L-functions are symmetric in the Satake parameters, we obtain the lemma by writing their coefficients in terms of this generating set.
Alternatively, one can translate the equalities into statements about finite-dimensional representations of $\GL(d,\C)$. For example, \eqref{lem:coefficientidentitieseq2} is equivalent to the statement
\begin{equation}\label{lem:coefficientidentitieseq10}
 \sym^2\circ\sym^2\oplus\sym^1\otimes\sym^3\cong\sym^4\oplus(\sym^2)^{\otimes2}.
\end{equation}
We leave the verification of the other identities as an exercise.
\end{proof}

\begin{lemma}\label{RankinSelberglemma}
 Suppose
 \begin{equation}
  L(s) = \sum_n \frac{a_n}{n^s}
  \ \ \ \ \ \ \ \ \ \
  \text{ and }
  \ \ \ \ \ \ \ \ \ \
  M(s) = \sum_n \frac{b_n}{n^s}
 \end{equation}
 are L-functions, with
 \begin{equation}
  L\times M(s) = \sum_n \frac{c(n, L \times M)}{n^s}.
 \end{equation}
 If $p$ is a good prime for both $L$ and $M$, then $p$ is a good prime for $L\times M$, and 
 \begin{align}
  \label{RankinSelberglemmaeq1}c(p, L \times M)=\mathstrut & a_p b_p\\
  \label{RankinSelberglemmaeq2}c(p^2, L \times M) =\mathstrut & a_p^2 b_p^2 - (a_p^2 b_{p^2} + a_{p^2}b_p^2) + 2 a_{p^2}b_{p^2}\\
  \label{RankinSelberglemmaeq3}c(p^3, L \times M) =\mathstrut & a_p^3 b_p^3 - 2(a_p^3  b_p b_{p^2} + a_p a_{p^2}b_p^3) 
                + ( a_{p}^3b_{p^3} + a_{p^3}b_p^3) + 5a_pa_{p^2}b_pb_{p^2}\cr
                & \phantom{x} -3(a_{p^3}b_pb_{p^2} + a_pa_{p^2}b_{p^3}) + 3a_{p^3}b_{p^3}.
 \end{align}
\end{lemma}
\begin{proof}
These identities are proved in a similar manner to Lemma \ref{lem:coefficientidentities}.
\end{proof}

\subsubsection{Manipulating $L$-functions}

As described in Section~\ref{sec:intuitions}, it is useful to be able to factor $L$-functions in a variety of ways.  In particular, different factorizations allow us to control where the zeros and poles might potentially come from and allow us to assume weaker versions of the Ramanujan bound in order to reach our conclusions.  In this section we present a variety of such factorizations.

\begin{lemma}\label{lem:factorL}
Suppose
\begin{equation}
\label{eqn:factorL}
L^S(s) = \prod_{p\ \mathrm{good}}\:\prod_{j=1}^d (1-\alpha_{j,p} p^{-s})^{-1},
\end{equation}
where $|\alpha_{j,p}|\le p^\theta$ for some $\theta\in \R$.
Then for $\sigma > 1+\theta$, 
\begin{align}\label{eqn:Lfactored}
L^S(s)=\mathstrut & \prod_{p \text{ good}} (1+a_p p^{-s})
                       \cdot h_2(2s),\cr
=\mathstrut & \prod_{p \text{ good}} (1+a_p p^{-s}) \cdot L^S(2s,\sym^2) \cdot h_3(3s),\cr
=\mathstrut & \prod_{p \text{ good}} (1+a_p p^{-s}) \cdot L^S(2s,\sym^2) \cdot L^S(3s,\sym^3) \cr
&\phantom{xxxxxxxx} \cdot L^S(3s,\mathbf{1} \times \sym^2)^{-1} \cdot h_4(4s)\cr
=\mathstrut & \prod_{p \text{ good}} (1+a_p p^{-s}) \cdot L^S(2s,\sym^2) \cdot L^S(3s,\sym^3) \cr
&\phantom{xxxxxxxx} \cdot L^S(3s,\mathbf{1} \times \sym^2)^{-1} \cdot L^S(4s, \ext^2 \times \sym^2) \cdot h_5(5s)\cr
=\mathstrut & \prod_{p \text{ good}} (1+a_p p^{-s} + a_p^2 p^{-2s} ) \cdot L^S(2s,\ext^2)^{-1} \cdot h_3(3s),\cr
=\mathstrut & \prod_{p \text{ good}} (1+a_p p^{-s} + a_p^2 p^{-2s} ) \cdot L^S(2s,\ext^2)^{-1} \cdot L^S(3s,\sym^3) \cr
&\phantom{xxxxxxxx} \cdot L^S(3s,\mathbf{1} \times \ext^2) \cdot h_4(4s),\cr
=\mathstrut & \prod_{p \text{ good}} (1+a_p p^{-s} + a_p^2 p^{-2s} ) \cdot L^S(2s,\ext^2)^{-1} \cdot L^S(3s,\ext^3) \cr
&\phantom{xxxxxxxx} \cdot L^S(3s,\mathbf{1} \times \sym^2)^{-1}\cdot h_4(4s),\cr
=\mathstrut & \prod_{p \text{ good}} (1+a_p p^{-s} + a_p^2 p^{-2s} ) \cdot L^S(2s,\ext^2)^{-1} \cdot L^S(3s,\sym^3) \cr
&\phantom{xxxxxxxx} \cdot L^S(3s,\mathbf{1} \times \ext^2)\cdot L^S(4s,\sym^2 \times \ext^2) \cdot h_5(5s),\cr
\end{align}
where  $h_j(s)$, which is different at each instance,
is regular and nonvanishing for $\sigma > 1 + j\theta$.
\end{lemma}

\begin{proof} Using \eqref{eq:iden1} and \eqref{eq:iden2}, or by direct
verification, we have
\begin{align}\label{eqn:sixfactors}
\sum &A_n x^n=(1 + A_1 x )\cr
    &\phantom{x}\times (1 + A_2 x^2 + (A_4-A_1A_3 + A_2^2)x^4 \cr
         &\phantom{xxxxx} + (A_6 -A_1 A_5 + A_1^2 A_4 + A_3^2 - 2 A_1 A_2 A_3 + A_2^3)x^6 + \mathcal O(x^8)) \cr
   &\phantom{x}\times (1+(A_3 - A_1 A_2) x^3 + \cr
         &\phantom{xxxxx} + (A_6 - A_1A_5 - A_2A_4 + A_1^2A_4+ A_1A_2A_3 - A_1^3A_3 - A_2^3 + A_1^2 A_2^2) x^6
\cr &\phantom{xxxxx}+ \mathcal O(x^9)) \cr
   &\phantom{x}\times (1+(A_1^2 A_2 - A_2^2) x^4 + \mathcal O(x^8)) \cr
   &\phantom{x}\times (1+(A_5 - A_1^3 A_2 + A_1 A_2^2 + A_1^2 A_3 - A_2 A_3  - A_1 A_4)x^5 + \mathcal O(x^{10}))\cr
   &\phantom{x}\times (1+(A_1^4A_2 - 2A_1^2A_2^2 + A_2^3 + A_1A_2A_3 - A_1^2A_4 - A_3^2 + A_2A_4 +A_1A_5 - A_6)x^6 \cr
            &\phantom{xxxxx} + \mathcal O(x^{12}))\cr
   &\phantom{x}\times (1+\mathcal O(x^7)).
\end{align}

Writing $A_n=a_{p^n}$ and using the 
identities in Lemma~\ref{lem:coefficientidentities} establishes the first
four lines of~\eqref{eqn:Lfactored}.
For the 3rd line in~\eqref{eqn:Lfactored}, one can recognize the tensor degree 3 lifts by
writing it as
\begin{equation}
\frac{1+ A_3 x^3 + (A_6 - A_1 A_5 + A_2 A_4) x^6 + \mathcal O(x^9)}{1 + A_1A_2 x^3 +
                       (2 A_2 A_4 - A_1^2 A_4 + A_1^3 A_3 - 2 A_1 A_2 A_3 + A_2^3)x^6 + \mathcal O(x^9)} .
\end{equation}
The other lines in~\eqref{eqn:Lfactored} are established similarly.
\end{proof}

We leave it as an exercise for the reader to use the information which has
already been presented to write down identities like~\eqref{eqn:Lfactored}
involving $h_6(s)$ and $h_7(s)$.

\subsubsection{Zeros at the edge of the half-plane of convergence}

The absolute convergence of an Euler product in a half-plane
$\sigma>\sigma_0$ implies that the function has no zeros or poles in that
region.  If the Euler product has a meromorphic continuation to a 
larger region, it could possibly have zeros or poles on the
$\sigma_0$-line.
The lemma in this section, which 
basically follows the proof of Lemma~1 of~\cite{KP},
says that if the Dirichlet coefficients $a(p)$ are small on average
then there are finitely many zeros or poles on the 
$\sigma_0$-line.
Our modification is that we only require the $L$-function
to satisfy a partial Ramanujan bound.

\begin{lemma}\label{lem:M2zerosimproved}
For $j=1,2,3$, suppose
\begin{equation}\label{Fj-defn}
F_j(s) := \sum_p \frac{p^{2-2j}|A(p^j)|^2\log(p)}{p^s}
\end{equation}
converges absolutely for $\sigma > 1$, and also there
exists $M_j > 0$ such that
\begin{equation}\label{eqns:apbounds1}
\liminf_{\sigma \to 1^+}\ (\sigma - 1) F_j(\sigma)  \le M_j .
\end{equation}
Then
\begin{equation}
L(s)=\prod_p (1 + A(p)p^{-s})(1 + A(p^2)p^{-2s})(1 + A(p^3)p^{-3s}) 
\end{equation}
is a nonvanishing analytic function in the half-plane~$\sigma>1$.
Furthermore, if $L(s)$ has a
meromorphic continuation to a neighborhood of $\sigma\ge 1$,
then $L(s)$ has at most $(M_1 + 2 M_2 + 3 M_3)^2$ zeros or poles on the
$\sigma=1$ line.
\end{lemma}

\begin{remark}\label{rem:M2M3theta}
Note that if $\theta < \frac23$ then \eqref{eqns:apbounds1} holds with $M_3=0$, and if
$\theta < \frac12$ then \eqref{eqns:apbounds1} holds with $M_2=0$.
\end{remark}

It will be clear from the proof that the above lemma generalizes to 
any number of $M_j$.  We wrote this version so that the proof looks
simpler, and because of the lack of obvious applications of a more general
version.

Corollary~\ref{cor:Mjzeros} is the analogue of the main lemma in~\cite{KP}.

\begin{corollary}\label{cor:Mjzeros}
Let
\begin{equation}
L(s)=\prod_p \sum_{j=0}^\infty A({p^j}) p^{-j s}
\end{equation}
and suppose $A(p^j)$ for $j=1$, $2$, $3$, satisfies the conditions in Lemma~\ref{lem:M2zerosimproved}
and for $j\ge 4$ we have $A(p^j) \ll p^{j\theta}$ for some $\theta < \frac34$.
Then $L(s)$ satisfies the conclusions of Lemma~\ref{lem:M2zerosimproved}.
\end{corollary}

Warning: Lemma~\ref{lem:M2zerosimproved} and Corollary~\ref{cor:Mjzeros} refer to
properties of $L(s)$ on $\sigma=1$.  The proofs of strong multiplicity one rely
on properties of $L(s)$ on $\sigma=\frac12$.  The change of variables
$s \to s - \frac12$ corresponds to $\theta \to \theta - \frac12$, so $\theta < \frac34$
in Corollary~\ref{cor:Mjzeros} corresponds to $\theta < \frac14$ in a
strong multiplicity one theorem.

\begin{proof}[Proof of Corollary~\ref{cor:Mjzeros}]
Note that
\begin{align}
\sum_{j=0}^\infty A({p^j})x^j =\mathstrut & (1 + A(p)x)(1 + A(p^2)x^2)(1 + A(p^3)x^3) (1 - A(p)A(p^2)x^3)(1 + \mathcal O(p^{4\theta}x^4))\cr
=\mathstrut & f_1(x) f_2(x) f_3(x) f_4(x) f_5(x),
\end{align}
say. Let $F_j(s) = \prod_p f_j(p^{-s})$.

By Lemma~\ref{lem:M2zerosimproved},
$F_4(s)=\prod_p \left(1-\tfrac{A(p)}{p^{-s}}\tfrac{A(p^2)}{p^{-2s}}\right )$
is a nonvanishing analytic function for $\sigma > 1$ because each of $\sum_p \tfrac{A(p)}{p^{-s}}$ and $\sum_p \tfrac{A(p^2)}{p^{-2s}}$ converge absolutely.  Also, $F_5(s)$ is a nonvanishing analytic function
for $\sigma > \frac14 + \theta$.  Since $\theta < \frac34$, $F_5(s)$ converges in a neighborhood of $\sigma \ge 1$.

The product $F_1(s) F_2(s) F_3(s)$ meets the conditions of Lemma~\ref{lem:M2zerosimproved},
so that completes the proof because  $F_4(s) F_5(s)$ is regular and nonvanishing in a neighborhood of $\sigma \ge 1$ and $L(s) = F_1(s) F_2(s) F_3(s) F_4(s) F_5(s)$.
\end{proof}

\begin{proof}[Proof of Lemma~\ref{lem:M2zerosimproved}] 
We first show that the logarithm of $L(s)$ converges absolutely for $\sigma > 1$,
which proves that $L(s)$ is analytic and nonvanishing in that region.
Since $\log(1 + Y) = Y + \mathcal O(Y^2)$, the logarithm of one of the factors of
$L(s)$ looks like
\begin{equation}\label{eqn:logAp}
\sum_p \Big(\frac{A(p^j)}{p^{j s}} + \mathcal O(A(p^j)^2 p^{-2 j \sigma})\Big).
\end{equation}
The absolute convergence of $F_j(s)$ in (\ref{Fj-defn}) implies (by ignoring the $\log(p)$ factor)
that the  big-$\mathcal O$ term above converges absolutely for $\sigma > 1-\frac{1}{2j}$,
so it defines an analytic function in a neighborhood of $\sigma \ge 1$.

For the main term in \eqref{eqn:logAp}, if $\sigma > 1$ then by Cauchy's inequality we have
\begin{equation}
\left(\sum_p \frac{|A(p^j)|}{p^{j \sigma}}\right)^2
\le \left(\sum_p \frac{|A(p^j)|^2}{p^{(2j-1)\sigma}}\right)
\left(\sum_p \frac{1}{p^{\sigma}}\right).
\end{equation}
The first sum on the right above converges absolutely by (\ref{Fj-defn}) and the condition $2j-2+\sigma \leq (2j-1)\sigma$ for $\sigma \geq 1$,
and trivially so does the second sum. 

Thus, $L(s)$ is the product of three nonvanishing analytic functions in $\sigma > 1$.

Now we consider the zeros of $L(s)$ on $\sigma=1$.
Taking the logarithmic derivative of $L(s)$ and using the
same argument as above for the lower order terms, we have
\begin{align}\label{eqn:ld2}
\frac{L'}{L}(s) =\mathstrut & h(s) - \sum_p \frac{A(p)\log(p)}{p^s} + 2\, \frac{A(p^2)\log(p)}{p^{2s}} 
 +3 \,\frac{A(p^3)\log(p)}{p^{3s}} ,
\end{align}
where $h(s)$ is regular in a neighborhood of $\sigma \ge 1$. 

Suppose $s_1,\ldots,s_J$ are zeros or poles of $L(s)$,
with $s_j = 1+i t_j$ having multiplicity $m_j$.
We have
\begin{equation}
\frac{L'}{L}(\sigma+i t_j) \sim \frac{m_j}{\sigma-1},
\ \ \ \ \ \ \ \
\mathrm{as}
\ \
\sigma \to 1^+,
\end{equation}
therefore 
\begin{equation}\label{eqn:klimit2}
- \sum_p
\left(
\frac{A(p)\log(p)}{p^{\sigma + it_j}}  
+ 2\,  \frac{A(p^2)\log(p)}{p^{2(\sigma + it_j)}}
+ 3\,  \frac{A(p^3)\log(p)}{p^{3(\sigma + it_j)}}
\right)
\sim \frac{m_j}{\sigma-1},
\ \ \ \ \ \ \ \
\mathrm{as}
\ \
\sigma \to 1^+.
\end{equation}
Now write
\begin{equation}\label{eqn:ksum}
k(s) = - \sum_{j=1}^J m_j \sum_p 
\left(
\frac{A(p)\log(p)}{p^{s+i t_j}} 
+2\, \frac{A(p^2)\log(p)}{p^{2(s+i t_j)}}
+3\, \frac{A(p^3)\log(p)}{p^{3(s+i t_j)}}
\right)
\end{equation}
By \eqref{eqn:klimit2} we have
\begin{equation}\label{eqn:klimitL2}
k(\sigma) \sim \frac{\sum_{j=1}^J m_j^2}{\sigma-1},
\ \ \ \ \ \ \ \
\mathrm{as}
\ \
\sigma \to 1^+.
\end{equation}
We will manipulate \eqref{eqn:ksum} so that
so that we can use \eqref{eqns:apbounds1} 
to give a bound on $\sum m_j^2$ in terms of $M_1$, $M_2$, and $M_3$.

By Cauchy's inequality and the assumptions on $M_\ell$ we have
\begin{align} \label{eqn:kbound2}
|k(\sigma)| \le \mathstrut &
\sum_{\ell=1}^3
\ell \left|
\sum_p  \frac{p^{(1-\ell)\sigma}A(p^\ell)\log(p)}{p^{\sigma}} 
\sum_{j=1}^J \frac{m_j}{p^{\ell it_j}}\right| \cr
\le \mathstrut &
\sum_{\ell=1}^3
\ell
 \left(\sum_p \frac{p^{2(1-\ell)\sigma}|A(p^\ell)|^2 \log(p)}{p^{\sigma}}\right)^{\frac12}
 \left( \sum_p  \frac{\log p}{p^\sigma} \biggl|\sum_{j=1}^J m_j p^{-\ell it_j} \biggr|^2\right)^{\frac12}\nonumber\\
\le\mathstrut  & (1+o(1)) 
\sum_{\ell=1}^3
\ell
\left(\frac{M_\ell^2}{\sigma-1} \right)^{\frac12}
\biggl(
\sum_{j=1}^J \sum_{k=1}^J  m_j m_k
\sum_p  \frac{\log p}{p^{\sigma + i\ell (t_j - t_k)} }
\biggr)^{\frac12} \nonumber\\
\sim & \frac{M_1 + 2 M_2 + 3 M_3}{(\sigma-1)^\frac12 } 
\left(
\sum_{j=1}^J \frac{m_j^2}{\sigma-1}
  \right)^{\frac12}
\ \ \ \ \ \ \ \
\mathrm{as}
\ \
\sigma \to 1^+.
\end{align}
In the last step we used the fact that the Riemann zeta function has a simple
pole at~1 and no other zeros or poles on the $1$-line.

Combining~\eqref{eqn:klimitL2} and \eqref{eqn:kbound2} we have 
$\displaystyle
\sum_{j=1}^J m_j^2 \le (M_1 +2  M_2 + 3 M_3)^2.
$
Since $m_j\ge 1$, we get $J \le (M_1 +2  M_2 + 3 M_3)^2$, and the proof is complete.
\end{proof}

\subsection{Proof of Theorem~\ref{thm:SSMO}}

The proof begins the same as that of Proposition~\ref{prop:smoL}, by considering the ratio of completed $L$-functions:
\begin{equation}\label{eqn:Lambdaratio}
\lambda(s) := \frac{\Lambda_1(s)}{\Lambda_2(s)},
\end{equation}
which is a meromorphic
function of order~1 and satisfies the functional equation $\lambda(s)=\varepsilon \overline{\lambda}(1-s)$,
where $\varepsilon = \varepsilon_1/\varepsilon_2$.
First we will show that $\lambda(s)$ has finitely many zeros or poles in $\sigma \ge \frac12$,
which by the functional equation implies that $\lambda(s)$ has finitely many zeros or poles 
in the complex plane.

By the assumptions on the $\Gamma$-factors, all zeros or poles of $\lambda(s)$
in $\sigma \ge \frac12$ must come
from $L_1(s)/L_2(s)$.  Consider a local factor of that ratio:
\begin{align}
&\frac{1+a_p x + a_{p^2} x^2 + a_{p^3} x^3 + \mathcal O(p^{4\theta} x^4)}
     {1+b_p x + b_{p^2} x^2 + b_{p^3} x^3 + \mathcal O(p^{4\theta} x^4)}\cr
&\phantom{xxxxx}=1+(a_p - b_p) x + (a_{p^2} - b_{p^2} + b_p(a_p - b_p))x^2\cr
&\phantom{xxxxxXX}+(a_{p^3} - b_{p^3} + (b_p^2 - b_{p^2}) (a_p - b_p) -b_p (a_{p^2} - b_{p^2}))x^3 + \mathcal O(p^{4\theta} x^4)\cr
&\phantom{xxxxx}= (1 + (a_p - b_p) x) (1 + (a_{p^2} - b_{p^2}) x^2) (1 + (a_{p^3} - b_{p^3}) x^3)\cr
&\phantom{xxxxxXX}\times (1 - b_p  (a_p - b_p) x^2) (1 - b_p (a_{p^2} - b_{p^2})) x^3) \cr
&\phantom{xxxxxXX}\times (1 - (2 b_p^2 - a_p b_p + a_{p^2} - 2 b_{p^2})(a_p - b_p) x^3) \cr
&\phantom{xxxxxXX}\times (1 + b_p (a_{p^2} - b_{p^2}) x^3) (1 + \mathcal O(p^{4\theta}x^4)))\cr
&\phantom{xxxxx}= (1 + (a_p - b_p) x) (1 + (a_{p^2} - b_{p^2}) x^2) (1 + (a_{p^3} - b_{p^3}) x^3)\cr
&\phantom{xxxxxXX}\times  (1 +  \mathcal O(p^\theta(a_p - b_p) x^2)) (1 + \mathcal O(p^{2\theta} (a_{p} - b_{p}) x^3)) \cr
&\phantom{xxxxxXX}\times (1 + \mathcal O(p^\theta (a_{p^2} - b_{p^2}) x^3)) (1 + \mathcal O(p^{4\theta}x^4)))\cr
&\phantom{xxxxx}= f_1(x) \times \cdots \times f_7(x),
\end{align}
say.  Let $F_j(s) = \prod_p f_j(p^{-s})$.

We will show that Lemma~\ref{lem:M2zerosimproved} applies to 
$F(s+ \frac12) := F_1(s+\frac12)F_2(s+\frac12)F_3(s+\frac12)$.
There are three cases to consider, but it suffices to consider the weakest one:
$\theta < \frac14$ and $B_p$ and $C_p$ satisfy~\eqref{eqn:liminf1}.
The assumptions on $A_p$, $B_p$, and $C_p$ involve the convergence of
sums with these summands:
\begin{equation}
\frac{p\, |a_{p}-b_{p}|^2 \log(p)}{p^\sigma},
\ \ \ \ \ \ 
\frac{|a_{p^2}-b_{p^2}|^2 \log(p)}{p^\sigma}
\ \ \ \ \ \
\text{and}
\ \ \ \ \ \
\frac{p^{-1} |a_{p^3}-b_{p^3}|^2 \log(p)}{p^\sigma}
\end{equation}
By replacing $a_n$ and $b_n$ by $a_n/\sqrt{n}$ and $b_n/\sqrt{n}$,
respectively, (equivalently, replacing $s$ by $s + \frac12$),
we see that condition \eqref{eqns:apbounds1} applies,
so $F(s+\frac12)$ has finitely many zeros or poles in $\sigma \ge 1$,
so $F(s)$ has finitely many zeros or poles in $\sigma \ge \frac12$. 

Now we check that $F_j(s)$, for $j=4,5,6,7$, defines a nonvanishing analytic function
in a neighborhood of~$\sigma \ge \frac12$.  For $F_7$, this requires $\theta < \frac14$.
For $F_4$, comparing to $F_1$ we only require $\theta < \frac12$.  The same condition is
sufficient for comparing $F_5$ to $F_1$, and for comparing $F_6$ to $F_2$.

Thus, we have established
that $\lambda(s)$ has finitely many zeros or poles in a neighborhood of
$\sigma \ge \frac12$, and so by the functional equation, finitely many
zeros or poles in the complex plane.

Since $\lambda(s)$ has order~$1$, 
by the Hadamard factorization
theorem there exists $A\in \C$ such that
\begin{equation}\label{eqn:exprat}
\lambda(s) = e^{A s} r(s)
\end{equation}
where $r(s)$ is a rational function.

By~\eqref{eqn:exprat}, as $|s|\to\infty$,
\begin{equation}\label{eqn:hadrat}
\lambda(s) = C_0 s^{m_0}  e^{A s} \bigl(1 + \mathcal O(s^{-1})\bigr),
\end{equation}
for some~$C_0\not=0$ and~$m_0\in \Z$.
On the other hand, if~$b(n_0)$ is the first non-zero Dirichlet coefficient (with $n_0>1$) of $L_1(s)/L_2(s)$,
then by \eqref{eqn:Lambdaratio} and Stirling's formula, as $\sigma\to\infty$,
\begin{equation}\label{eqn:stirdir}
\lambda(s) = \bigl(B_0 s^{B_1} e^{B_2 s\log s + B_3 s }(1 +\mathcal O(s^{-1}))\bigr)\bigl(1 + b(n_0) n_0^{-s}
+ \mathcal O((n_0+1)^{-\sigma}).
\end{equation}
Comparing those two asymptotic formulas, the leading terms must be equal, so
$B_0=C_0$, $B_1=m_0$, $B_2=0$, and $B_3=A$.

Now let $s\to\infty$ along the curve $s=\frac12 \log(t)/ \log(n_0)+it$.
On that curve, the final factor in \eqref{eqn:stirdir} equals
\begin{align}\label{eqn:sqrtmain}
1 + b(n_0) \frac{e^{-i t \log(n_0)}}{\sqrt{t}} &+
\mathcal O\left(\frac{1}{t^{\frac12 \frac{\log(n_0+1)}{\log(n_0)}}}\right) \cr
  = 1 +& b(n_0) \frac{e^{-i\frac{\pi}{4}-i t \log(n_0)}}{\sqrt{\mathstrut
s}}
+ \mathcal O\left(\frac{\log(|s|)}{s^{\frac32}}\right)
+ \mathcal O\left(\frac{1}{s^{\frac12 \frac{\log(n_0+1)}{\log(n_0)}}}\right).
\end{align}
We have assumed the positive integer $n_0$ exists, with $b(n_0) \not= 0$.
So there is a genuine main term of size $1/\sqrt{\mathstrut s}$
in~\eqref{eqn:sqrtmain},
which contradicts the big-$\mathcal O$ terms in \eqref{eqn:hadrat} and
\eqref{eqn:stirdir}.
Thus $b(n_0)$, the first non-zero Dirichlet coefficient of
$L_1(s)/L_2(s)$, does not exist, so we conclude $L_1 = L_2$.

An alternate way to complete the proof uses the fact that a Dirichlet series is an
almost periodic function, as in~\cite{KP}.

\subsection{Proof of Theorem~\ref{thm:SSMOpartial}}

We prove Theorem~\ref{thm:SSMOpartial}.
The proof of Theorem~\ref{thm:SSMOgeneral} is identical, because the
$p$th coefficients of the lifts of tensor degree~$m$ are
a basis for the symmetric polynomials of degree~$m$.

Suppose $L_1(s) = \sum a_n n^{-s}$ and $L_2(s) = \sum b_n n^{-s}$
satisfy the given conditions.  As in the previous proofs, the starting
point is the ratio of completed L-functions
\begin{equation}
\lambda(s) = \frac{\Lambda_1(s)}{\Lambda_2(s)} .
\end{equation}
And as previously, the main step is to show that $L_1(s)/L_2(s)$ contributes
finitely many zeros or poles in $\sigma\ge \frac12$.  

The theorem contains many cases, so we chose one for illustration.
Suppose $L_1$ satisfies \ref{sixthext2} and $L_2$ satisfies~\ref{quarterext2sym3}.
By Lemma~\ref{lem:factorL} we have
\begin{align}\label{eqn:L1overL2}
\frac{L^S_1(s)}{L^S_2(s)} =\mathstrut &
\frac{\prod_{p\ \text{good}} 
(1+a_p p^{-s} + a_p^2 p^{-2s}) }
{\prod_{p\ \text{good}} (1+b_p p^{-s} + b_p^2 p^{-2s})}  \cr
&\phantom{xxx}\times
\frac{ L_1^S(2s,\ext^2)^{-1} \cdot h_3(3s) }
{ L_2^S(2s,\ext^2)^{-1} \cdot   L^S_2(3s,\sym^3) \cdot L^S_2(3s, \mathbf{1}\times \ext^2)\cdot h_4(4s)}
\end{align}
where ``$p$ good'' refers to primes which are good for both $L_1$ and~$L_2$.
By the assumptions, the factor on the second line above is regular and
nonvanishing on $\sigma \ge \frac12$.

Writing the first factor in \eqref{eqn:L1overL2} as
\begin{equation}
\prod_{p\ \text{good}} \sum_{j=0}^\infty \mathcal{A}(p^j) p^{-js}
\end{equation}
we have $\mathcal{A}(1) = 1$,
\begin{align}\label{eqn:scriptA}
\mathcal{A}(p) =\mathstrut & a_p - b_p, \cr
\mathcal{A}(p^2) =\mathstrut & a_p (a_p - b_p) = a_p \mathcal{A}(p), \cr
\mathcal{A}(p^3) =\mathstrut & -(a_pb_p + b_p^2) (a_p - b_p) = -(a_pb_p + b_p^2) \mathcal{A}(p), \cr
\end{align}
and for all $j$, $\mathcal{A}(p^j) = \mathcal O(p^{j \theta^*})$ where
$\theta^* = \max(\theta_1, \theta_2) < \frac14$.
By assumption,
\begin{equation}\label{eqn:scriptAbound}
\liminf_{\sigma \to 1^+} (\sigma - 1) \sum_p \frac{p\, |\mathcal{A}(p)|^2 \log p}{p^\sigma}
\le \mathcal{M}_1
\end{equation}
for some $\mathcal{M}_1 < \infty$.  
By \eqref{eqn:scriptA}, the bounds on $a_j(p)$, and \eqref{eqn:scriptAbound},
we have
\begin{align}\label{eqn:scriptAboundHigher}
\liminf_{\sigma \to 1^+} (\sigma - 1) \sum_p \frac{p^{1-2\theta^*}\, |\mathcal{A}(p^2)|^2 \log p}{p^\sigma}
\le \mathstrut \mathcal{M}_1 , \cr
\liminf_{\sigma \to 1^+} (\sigma - 1) \sum_p \frac{p^{1-4\theta*}\, |\mathcal{A}(p^3)|^2 \log p}{p^\sigma}
\le \mathstrut \mathcal{M}_1 .
\end{align}

If we let $A(n) = \sqrt{n} \mathcal{A}(n)$, then \eqref{eqn:scriptAbound} is
exactly the same as the bound on the $j=1$ case of \eqref{eqns:apbounds1}
with $M_1 = \mathcal{M}_1$.  Also, \eqref{eqn:scriptAboundHigher} implies
the $j=2$ and $j=3$ cases of  \eqref{eqns:apbounds1} with
$M_2=M_3=0$.  Finally, $A(n)$ satisfies a partial Ramanujan bound
with $\theta = \frac12 + \theta^* < \frac34$.  Thus, we can apply Corollary~\ref{cor:Mjzeros}
to conclude that $\sum A(n) n^{-s}$ has finitely many zeros or poles
in $\sigma \ge 1$, which is equivalent to $L_1^S(s)/L_2^S(s) = \sum \mathcal{A}(n) n^{-s}$
having finitely many zeros or poles
in $\sigma \ge \frac12$.

That completes the main step in this proof.
The remainder of the proof is
exactly as in the proof of Theorem~\ref{thm:SSMO}.

\section{L-functions with an arithmetic source}\label{sec:arithmetic}

The $L$-functions we have been discussing are defined axiomatically.
All known and conjectured $L$-functions arise from an arithmetic
or automorphic object.  When such a connection is known, it is
possible to prove stronger multiplicity one results.

If $X$ is an algebraic variety then one can form its Hasse-Weil $L$-function~$L(s,X)$.
Serre's book~\cite{Ser}, in
Section~6.3 ``About $N_X(p)-N_Y(p)$,'' addresses the question
of what conditions on the Dirichlet coefficients imply that
the two varieties have the same $L$-function.  For example, suppose
$X$ and $Y$ are elliptic curves over $\Q$, with $L$-function coefficients $a(n,X)$ and $a(n,Y)$.
If $|a(p,X) - a(p,Y)|< 2-\varepsilon$ for all but finitely many~$p$,
then actually $L(s,X) = L(s,Y)$.  Thus, for elliptic curve $L$-functions one
can obtain the conclusion of Theorem~\ref{thm:SSMO} with significantly
weaker assumptions.

If $\pi=\otimes\pi_p$ is a cuspidal automorphic representations of the
group $\GL(n,\A_\Q)$, then $L(s, \pi)$ is a primitive degree-$n$ analytic $L$-function,
satisfying all the axioms of Section~\ref{sec:Ldefinition},
conjecturally for Axiom 4 (Temperedness).
That all axiomatically defined $L$-functions are automorphic (with suitably
restrictive axioms, of course), is likely; Selberg \cite{Sel} made that conjecture
for a particular set of axioms.

Automorphic $L$-functions, as opposed to those that are axiomatically defined, have much stronger multiplicity one theorems because
of progress on the \term{Selberg orthonormality conjecture}:

\begin{conjecture}[Selberg Orthonormality Conjecture]
 Suppose that $L_1$ and $L_2$ are primitive $L$-functions with Dirichlet
 coefficients $a(p)$ and $b(p)$. Then
 \begin{equation}\label{eqn:orthonormality}
  \sum_{p\le X} \frac{a(p)\overline{b(p)}}{p}=\delta({L_1, L_2}) \log\log(X)+O(1),
 \end{equation}
 where $\delta({L_1, L_2}) = 1$ if $L_1=L_2$, and $0$ otherwise.
\end{conjecture}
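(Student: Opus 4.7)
The natural route is via Rankin--Selberg convolutions. The plan is to form (or assume the existence of) an $L$-function $L(s, L_1 \times \overline{L_2})$ whose local factor at a good prime $p$ is $\prod_{i,j}(1-\alpha_{i,p}\overline{\beta_{j,p}}p^{-s})^{-1}$, and to extract the claimed asymptotic from its analytic behavior near $s=1$. The key analytic input is the following dichotomy: this convolution is holomorphic and non-vanishing on the line $\Re(s)=1$, with a simple pole at $s=1$ exactly when $L_1 = L_2$, and no pole on $\Re(s)=1$ otherwise. The ``simple-pole-iff-equal'' statement is precisely where primitivity is used: a primitive object cannot share a factor with a distinct primitive object, so the Rankin--Selberg $L$-function of $L_1$ and $\overline{L_2}$ factors through the inner product $\delta(L_1,L_2)$.

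Granted the above, I would take minus the logarithmic derivative to obtain a Dirichlet series
\[
-\frac{L'}{L}(s,L_1\times\overline{L_2}) = \sum_n \frac{c(n)}{n^s}, \qquad c(p)=a(p)\overline{b(p)}\log p,
\]
with analogous formulas for prime-power coefficients. A quantitative Tauberian theorem (Wiener--Ikehara with remainder, or a contour shift past $\Re(s)=1$) then yields
\[
\sum_{n\le X} c(n) = \delta(L_1,L_2)\,X + o(X).
\]
A first partial summation converts this into the corresponding asymptotic for $\sum_{p\le X}a(p)\overline{b(p)}\log p$, and a second partial summation replaces the weight $\log p$ with $1/p$, producing $\delta(L_1,L_2)\log\log X + O(1)$, which is the target.

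The previous two steps ignored the contribution of prime powers $p^k$ with $k\ge 2$. To dispatch these, I would invoke Rudnick--Sarnak's Hypothesis H, which gives $\sum_p |a(p^k)|^2 (\log p)^2 p^{-k} = O(1)$ for each $k\ge 2$, and the analogous bound for $b$. Applied termwise via Cauchy--Schwarz, this absorbs the prime-power cross terms into the $O(1)$ error and also justifies passing from the Dirichlet coefficients of $-L'/L(s,L_1\times\overline{L_2})$ to the bare expression $\sum_p a(p)\overline{b(p)}/p$.

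The main obstacle is the first step. For general primitive $L$-functions in the Selberg class there is no known construction of $L(s,L_1\times\overline{L_2})$, still less a proof of its meromorphic continuation or its non-vanishing on $\Re(s)=1$. In the automorphic setting these inputs are available: the convolution is built by Jacquet--Piatetski-Shapiro--Shalika, its non-vanishing on $\Re(s)=1$ is a theorem of Shahidi, and Hypothesis H is known for $\GL(n)$ with $n\le 4$ through the work of Kim and others; this is precisely what powers the unconditional results of \cite{LWY, AvSm} cited in the body of the paper. Producing these analytic inputs for an arbitrary primitive element of the Selberg class seems to require establishing that it is automorphic, which is itself one of Selberg's guiding conjectures, so in the abstract setting the statement should be regarded as genuinely conjectural and reducible by the scheme above to the existence of a well-behaved Rankin--Selberg theory within the Selberg class.
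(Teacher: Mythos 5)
The statement you were asked about is labelled a \emph{Conjecture} in the paper, and the paper supplies no proof of it: it is invoked as a hypothesis, with the known cases (standard $L$-functions of cuspidal automorphic representations of $\GL(n)$, under Hypothesis~H, hence unconditionally for $n\le 4$) attributed to Rudnick--Sarnak, Liu--Wang--Ye and Avdispahi\'c--Smajlovi\'c. Your proposal is therefore in the right spirit: you correctly recognize that no unconditional proof exists for general primitive $L$-functions, and the mechanism you sketch --- form the Rankin--Selberg convolution $L(s,L_1\times\overline{L_2})$, use non-vanishing and holomorphy on $\Re(s)=1$ with a simple pole at $s=1$ precisely when $L_1=L_2$ (this is where primitivity enters), control the prime-power terms by Hypothesis~H, and extract the $\log\log X$ asymptotic --- is exactly the route taken in the works the paper cites for the automorphic case. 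So there is no discrepancy with the paper to report; your reduction matches the state of the art that the paper relies on.

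Two small cautions if you were to write this out in full. First, Hypothesis~H must be used with summability over all $k\ge 2$ (not merely termwise for each fixed $k$) to absorb the prime-power contribution into $O(1)$; this is handled in Rudnick--Sarnak but is easy to gloss over. Second, obtaining the error term $O(1)$ in $\sum_{p\le X}a(p)\overline{b(p)}/p$ by two partial summations from $\sum_{n\le X}c(n)=\delta X+o(X)$ is not automatic --- an $o(X)$ error only yields $o(\log\log X)$; the standard arguments instead work with $\log L(s,L_1\times\overline{L_2})$ or $-L'/L$ near $s=1+1/\log X$ (a Mertens-type argument) to get the stated form of the remainder.
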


For the standard $L$-functions of cuspidal
automorphic representations on $\GL(n)$,
Rudnick and Sarnak~\cite{RudSar} proved Selberg's
orthonormality conjecture for $\pi\cong \pi'$ under the assumption of Hypothesis~H.

Rudnick and Sarnak's \term{Hypothesis H} is the assertion
$$
 \sum_p \frac{a(p^k)^2\log^2(p)}{p^k} < \infty
$$
for all $k\ge 2$.  For a given $k$, this follows from
a partial Ramanujan bound $\theta<\frac12 - \frac{1}{2k}$. 
Since $k\ge 2$,
Hypothesis~H follows from the partial Ramanujan bound
$\theta<\frac14$.
Rudnick and Sarnak~\cite{RudSar} proved Hypothesis~H for
$n=2$, $3$.
The case of $n=4$ for Hypothesis~H was proven by Kim~\cite{K}.  For
$\pi\not\cong \pi'$ the Selberg Orthonormality Conjecture was proven, unconditionally for $n\leq 4$ and under Hypothesis~H, independently by Avdispahi\'{c}-Smajlovi\'{c} \cite{AvSm} and
Liu-Wang-Ye \cite{LWY}.

As a consequence of those results, we have
\begin{corollary}\label{thm:smogln}
  Suppose that $\pi$, $\pi'$ are (unitary) cuspidal automorphic representations of $\GL(n,\A_F)$, and suppose
 \begin{equation}\label{eqn:smogln}
  \sum_{p\le X} \frac{1}{p}\left| \tr A(\pi_p) - \tr A(\pi_p')\right|^2 
  \le (2-\epsilon) \log\log(X)
 \end{equation}
 for some $\epsilon>0$ as $X\to\infty$.  If $n\le 4$, or if
Hypothesis~H holds for both $L_\mathrm{fin}(s,\pi)$ and $L_\mathrm{fin}(s,\pi')$
(in particular if the partial Ramanujan conjecture $\theta<\frac14$ is true
for $\pi$ and $\pi'$), then $\pi=\pi'$.
\end{corollary}

\begin{proof}  Since  $\pi$ and $\pi'$ are cuspidal automorphic
representations of $\GL(n,\A_F)$, the $L$-functions  $L_1(s) = L_\mathrm{fin}(s,\pi)$
and $L_2(s) = L_\mathrm{fin}(s, \pi')$ are primitive $L$-functions. Hence,
by~\eqref{eqn:orthonormality}
\begin{align}\label{eqn:apbp2X}
 \sum_{p\le X} \frac{1}{p} |a(p)-b(p)|^2
 =\mathstrut&\sum_{p\le X} \frac{1}{p}  \bigl( |a(p|^2 + |b(p)|^2 - 2 \Re(a(p)\overline{b(p)})\bigr) \cr
 =\mathstrut& 2\log\log(X) - 2 \delta_{L_1, L_2} \log\log(X)+ O(1) \cr
 =& \begin{cases}
   O(1) & \text{ if }  L_1= L_2 \cr
   2\log\log(X) + O(1) & \text{ if } L_1\not = L_2. \cr
  \end{cases}
\end{align}

We have $\sum_{p\le X} \frac{1}{p} |a(p)-b(p)|^2  \le (2-\epsilon)\log\log(X)$ for some $\epsilon > 0$. This implies that
$\epsilon \log\log(X)$ is unbounded, and hence (\ref{eqn:apbp2X}) implies $L_1(s) = L_2(s)$. This gives us $\pi = \pi'$. \end{proof}

Since the $p$th Dirichlet coefficient of $L(s,\pi)$ is $a_p = \tr A(\pi_p)$,
we see that condition \eqref{eqn:smogln} is similar to, but significantly
weaker than, conditions like \eqref{eqn:liminf1} which appear in the results
for axiomatically defined L-functions.  As a measure of how surprising and
strong Corollary~\ref{thm:smogln} is, we note that by the simple estimate
(which follows from the prime number theorem but predates it)
\begin{equation}\label{eqn:weakpnt}
\sum_{p\le X}\frac{1}{p} \sim \log\log(X),
\end{equation}
and the fact that $1.4^2 < 2$, condition \eqref{eqn:smogln} holds if
$|\tr A(\pi_p) - \tr A(\pi_p')| < 1.4$ for all but finitely many~$p$.
Not only that, but the conclusion $\pi=\pi'$ follows from only an
assumption on their \emph{traces}.

For $\GL(2,\A_\Q)$, the Ramanujan bound along with \eqref{eqn:weakpnt}
implies a version of a result of Ramakrishnan~\cite{Ram}:
if $ \tr A(\pi_p) = \tr A(\pi_p')$ for $\frac78+\varepsilon$ of all
primes~$p$, then $\pi=\pi'$. This result was extended by Rajan~\cite{Raj}.

Note that there is a hidden condition in Corollary~\ref{thm:smogln}:
the representations are cuspidal, and so the L-functions are primitive.
The conclusion of the theorem is false for non-primitive L-functions:
$L(s)$ and $L(s)L(s,\chi)$ have their $p$th coefficients differ by~$1$
at almost all~$p$.

\section{Applications}
\label{sec:applications}

By ``application'' we mean ``deduce that two objects have identical
invariants from their L-functions having similar coefficients''.

The theorems which provide a key step in applications
have the form ``If the L-functions of $X$ and $Y$ have
Dirichlet
coefficients $a_X(p)$ and $a_Y(p)$ which are close enough for
sufficiently many $p$, then $a_X(p) = a_Y(p)$ for all good~$p$.''
The two main examples in this paper are
Corollary~\ref{thm:smogln}
and
Theorem~\ref{thm:SSMO}.
Theorem~\ref{thm:smogln} is considerably stronger in the
sense that there are fewer restrictions on the relationship
between $a_X(p)$ and $a_Y(p)$, but the cost is that it
requires $X$ and $Y$ to arise from a cuspidal automorphic
representation on~$GL(n)$.

Here we make the point that Theorem~\ref{thm:SSMO}
is also useful, because there are L-functions which
satisfy the appropriate axioms, but which are not
known to be associated to a cuspical automorphic
representation.

There are two ways which Corollary~\ref{thm:smogln}
might not apply.  The first is that we do not yet
have sufficient knowledge about the automorphy of
lifts.  
For example, suppose $f$ and $g$ are $\GL(2)$ cusp
forms, and suppose their Fourier coefficients
$a_f(p^5)$ and $a_g(p^5)$ are close.  Since those
are the $p$th coefficients of the symmetric 5th
powers of their L-functions, one can use a multiplicity
one result to conclude that $a_f(p^5) = a_g(p^5)$
for all good~$p$.  If $f$ and $g$ are holomorphic
cusp forms, then, by \cite{NT1},  Corollary~\ref{thm:smogln} 
applies.  But if $f$ and $g$ are Maass forms,
then the required closeness of $a_f(p^5)$ and $a_g(p^5)$
is more restrictive because only Theorem~\ref{thm:SSMO}
is available

Continuing with this example of two $\GL(2)$ cusp forms, suppose $|a_f(p)|$ 
is close to $|a_g(p)|$ and we wish to conclude that
$|a_f(p)|=|a_g(p)|$ for all good~$p$.
Since $|a_f(p)|^2$ is the $p$th coefficient of
$L(s,f \times \overline{f})$ and similarly
for $|a_g(p)|^2$, we can apply Theorem~\ref{thm:SSMO}.
However, that L-function is not primitive, so
we cannot apply Corollary~\ref{thm:smogln} because
the associated automorphic representation is not
cuspidal.  (We acknowledge that this particular case
can be handled easily, but we want to emphasize that
primitivity is required to apply Corollary~\ref{thm:smogln}.)

Recent progress has shown that many L-function arise
from automorphic representations on $\GL(n)$, but
as long as there are analytic L-functions which have not
been shown to arise in that way, there is a place for
multiplicity one results which only invoke the axiomatic 
properties for L-functions.
For example, 
prior to the work of Arthur~\cite{Art1}, the spin L-function
of a Siegel modular form on $\Sp(4)$ was on that list.
The theorem below (which is a strengthening of
Theorem~\ref{thm:m1siegelintro}) can have weaker assumptions, and a much
simpler proof, thanks to Arthur's work, but for illustration
we give the version that uses Theorem~\ref{thm:SSMO}.

We refer the reader to Section 2 of \cite{Sch} for the following notations and definitions for Siegel modular forms
on the paramodular group~$K(N)$.
Let $k, N \in \Z_{>0}$. Let $S_k^{\text{new}}(K(N))$ be the space of holomorphic  Siegel modular new-forms of genus $2$ with weight $k$, with respect to the paramodular congruence subgroup of level $N$. For $F \in S_k^{\text{new}}(K(N))$, let $\mu_F(n)$ be the Hecke eigenvalue for the Hecke operator $T(n)$ for all integers $n$
coprime to~$N$.

\begin{theorem}\label{thm:m1siegel}
 For $i = 1,2$ let  $F_i \in S_{k_i}^{\text{new}}(K(N_i))$,  with Hecke eigenvalues $\mu_i(n), i = 1,2$ for all $n$ coprime to $N_i$. Assume $k_i \geq 2$, and the $F_i$s are not Saito-Kurokawa lifts. Let $N = {\rm lcm}(N_1, N_2)$. If
 \begin{equation}\label{eqn:m1siegel}
  \sum_{\substack{p\le X \\ p \nmid N}} p\,\log(p) \left|p^{3/2-k_1}\mu_1(p)-p^{3/2-k_2}\mu_2(p)\right|^2\ll X
 \end{equation}
 as $X\to\infty$, then $k_1=k_2, N_1 = N_2$ and $F_1$ is a scalar multiple of $F_2$.
\end{theorem}
\begin{proof}
For $i=1,2$ let $a_i(n)$ be the $n$th Dirichlet coefficient of the degree $4$ spin $L$-functions $L(s,F_i,\spin)$. For $p \nmid N$, if $\alpha_{i,p},\beta_{i,p}$ are the Satake $p$-parameters of $F_i$, then
\begin{align*}
a_i(p)&=\alpha_{i,p}+\alpha_{i,p}^{-1}+\beta_{i,p}+\beta_{i,p}^{-1}\\
a_i(p^2)&= \alpha_{i,p}^2+\alpha_{i,p}^{-2}+(\alpha_{i,p}+\alpha_{i,p}^{-1})(\beta_{i,p}+\beta_{i,p}^{-1})+\beta_{i,p}^2+\beta_{i,p}^{-2}+2.
\end{align*}
By Section 3.4 in \cite{Pit}, for $p \nmid N$,
\begin{equation}\label{mu-a-relation}
 \mu_i(p)= p^{k_i-3/2} a_i(p), \qquad \mu_i(p^2) = p^{2k_i-3}(a_i(p^2) - \frac 1p).
 \end{equation}
Hence, condition \eqref{eqn:m1siegel} translates into
\begin{equation}\label{eqn:m1siegelb}
  \sum_{\substack{p\le X \\ p \nmid N}} p\,\log(p) \left|a_1(p)-a_2(p)\right|^2\ll X.
\end{equation}
By Lemmas 2.3 and 2.5 of \cite{Sch}, both the $F_i$ are of general type {\bf (G)}. Hence, by Proposition 2.4 of \cite{Sch} and (\ref{eqn:m1siegelb}), their spin $L$-functions $L(s, F_i, \spin)$ satisfy the hypothesis of Theorem~\ref{thm:SSMO}. Since the $F_i$ are of type {\bf (G)}, it is known by \cite{W}, that the two $L$-functions satisfy the Ramanujan conjecture. Hence, Theorem~\ref{thm:SSMO} implies that $L(s, F_1, \spin)=L(s, F_2, \spin)$. This gives us $k_1 = k_2, N_1 = N_2 = N$ and $a_1(n) = a_2(n)$ for all $n$. In particular, (\ref{mu-a-relation}) implies that $\mu_1(p) = \mu_2(p)$ and $\mu_1(p^2) = \mu_2(p^2)$ for all primes $p$ coprime to $N$. Since $T(p)$ and $T(p^2)$ generate the $p$-component of the
Hecke algebra, it follows that $\mu_1(n)=\mu_2(n)$ for all $n$ coprime to $N$. Finally, using Theorem 2.6 of \cite{Sch}, we can conclude that $F_1$ is a scalar multiple of $F_2$.
\end{proof}

\proof[Acknowledgments]  We thank Farrell Brumley and Abhishek Saha for carefully reading an earlier version of this paper and for providing useful feedback on it.
We are also grateful for the careful reading done by the referee who, among other things, proposed stronger results and a better way to think about
applications of multiplicity one theorems, and helped us better organize the paper.

\end{document}